\newcommand{\R}{\mathbb{R}}
\newtheoremstyle{mystyle}
  {}
  {}
  {}
  {}
  {\bfseries}
  {.}
  { }
  {\thmname{#1}\thmnumber{ #2}\thmnote{ (#3)}}
\theoremstyle{mystyle}
\newtheorem{proposition}{Proposition}
\newtheorem{theorem}{Theorem}
\newtheorem{lemma}{Lemma}
\newtheorem{remark}{Remark}
\newtheorem{definition}{Definition}
\newtheorem{assumption}{Assumption}
\newtheorem{example}{Example}
\DeclareMathOperator*{\argmin}{argmin} 
\begin{document}
 
\title{Optimization-free Smooth Control Barrier Function for Polygonal Collision Avoidance}
\author{Shizhen Wu,~\IEEEmembership{Graduate Student Member,~IEEE,}  Yongchun Fang$^*$,~\IEEEmembership{Senior Member,~IEEE,} Ning Sun,~\IEEEmembership{Senior Member,~IEEE,} Biao Lu,~\IEEEmembership{Member,~IEEE,} Xiao Liang,~\IEEEmembership{Senior Member,~IEEE,} and Yiming Zhao
\thanks{
$^*$ Corresponding author: Yongchun Fang.}
\thanks{This work was supported in part by National Natural Science Foundation of China under Grants 62233011, U22A2050, 62203235 and 62273187. 
} 
\thanks{The authors are with the Institute of Robotics and Automatic Information
  System, College of Artificial Intelligence, Nankai University, Tianjin 300353,
  China, and also with the Institute of Intelligence Technology and Robotic
  Systems, Shenzhen Research Institute of Nankai University, Shenzhen 518083,
  China
(e-mail: szwu@mail.nankai.edu.cn;   fangyc@nankai.edu.cn; sunn@nankai.edu.cn;
 lubiao@mail.nankai.edu.cn; liangx@nankai.edu.cn;  zhaoym@mail.nankai.edu.cn)}}


\maketitle

\begin{abstract}
  Polygonal collision avoidance (PCA) is short for the problem of collision avoidance between two polygons (i.e., polytopes in planar) that own their dynamic equations. This problem suffers the inherent difficulty in dealing with non-smooth boundaries and recently optimization-defined metrics, such as signed distance field (SDF) and its variants, have been proposed as control barrier functions (CBFs) to tackle PCA problems. In contrast, we propose an optimization-free smooth CBF method in this paper, which is computationally efficient and proved to be nonconservative. It is achieved by three main steps: a lower bound of SDF is expressed as a nested Boolean logic composition first, then its smooth approximation is established by applying the latest log-sum-exp method, after which a specified CBF-based safety filter is proposed to address this class of problems. To illustrate its wide applications, the optimization-free smooth CBF method is extended to solve distributed collision avoidance of two underactuated nonholonomic vehicles and  drive an underactuated container crane to avoid a moving obstacle respectively,  for which numerical simulations are also performed.

  \end{abstract}
  
  \begin{IEEEkeywords}
    Collision avoidance;
    Polytope; Control barrier function;
  Underactuated systems 
    \end{IEEEkeywords}

    \section{Introduction}
    \label{sec:introduction}
 
    The control barrier function-based quadratic programming (CBF-QP) control method is popular for safe robotic control \cite{ames2016control,ames2019control, zhou2025temporal,dong2025security, xu2018constrained, tan2021high,xiong2022discrete}. 
    The CBF-based control can provide a simple and computationally
    efficient way for safe control synthesis \cite{ames2016control,ames2019control, zhou2025temporal,dong2025security},
    and it has been gradually extended to higher-order systems \cite{xu2018constrained, tan2021high,xiong2022discrete}. 
    Collision avoidance, i.e., driving the robot away from the obstacle and keeping a distance, is a
    common goal in reactive control of multi-agent robots such as  \cite{luo2021grpavoid,fu2020distributed}. And recently, CBFs have been gradually used to achieve more complex collision avoidance  \cite{funada2024collision,thirugnanam2023nonsmooth, dai2023safe, thirugnanam2022duality, singletary2022safety, wei2024diffocclusion}.
    When the shapes of robots and obstacles are complicated (instead of points or spheres), collision detection is not an obvious problem. 
    Since polytopes can non-conservatively approximate any convex shapes,
    compared with collision avoidance between ellipsoids or generic convex sets \cite{funada2024collision,thirugnanam2023nonsmooth, dai2023safe},  
    references \cite{thirugnanam2022duality, singletary2022safety, wei2024diffocclusion}
     are particularly interested in developing CBFs for collision avoidance between polytopes/polygons, where polygon refers particularly to a
     polytope in planar. Recently, related works about developing CBF methods for avoiding obstacles with irregular shapes can also be found in \cite{wu2024multiple}.

    More specifically, in \cite{thirugnanam2022duality, singletary2022safety,  
    wei2024diffocclusion}, researchers have noticed the idea of building CBFs via optimization-defined metrics for polytopic/polygonal collision avoidance. However, choosing such optimization-defined metrics as CBFs directly 
    meets the common difficulties: the resulting CBFs are implicit and not globally continuously differentiable in general.  
    To address these issues,  in \cite{thirugnanam2022duality}, minimum distance function/field (MDF) \cite{gilbert1985distance} is used to formulate a nonsmooth CBF, and a duality-based convex optimization approach is used to calculate the gradient of MDFs almost everywhere.  
    In \cite{singletary2022safety}, it has been pointed out that compared with MDFs, it is advantageous to define a CBF that is negative in the event of a collision, for which the signed distance field (SDF) \cite{cameron1986determining} is an ideal choice but implicit and nonsmooth. 
    Then the local linear approximation of the SDF is further chosen as a CBF in \cite{singletary2022safety}, avoiding the mentioned common difficulties but resulting in a conservative safe set. In \cite{wei2024diffocclusion}, a systematic method is proposed for circumscribing polygons by strictly convex shapes with tunable accuracy. Then a conic program-defined scaling factor is built to replace SDF to solve polygonal collision avoidance with application in occlusion-free visual servoing. 
    As a brief summary,  the exact solution of some convex optimization is still required when calculating the gradient of the proposed CBFs in \cite{thirugnanam2022duality, singletary2022safety, wei2024diffocclusion}, {which means that such kinds of methods depend on solving optimization online and the resulting CBF is still implicit.}  
    Hence, a natural issue is how to build a CBF for polytopic obstacle avoidance 
    that satisfies:  
    1) it is also a non-conservative smooth approximation of SDF; 
    2) it is optimization-free, i.e.,  there are no optimization algorithms embedded in calculating its gradient, so as to be more computationally efficient and construct CBF explicitly. Such two considerations motivate this paper.
     
    One potential approach to achieve the aforementioned two
    purposes is drawing inspiration from the latest work    \cite{han2023efficient}, but additional difficulties come after. 
    In \cite{han2023efficient}, for the purposes of gradient-based trajectory planning, 
    a lower boundary of SDF composed of $\max$/$\min$ operators is built first, after which an approximated smooth metric (called approx-SDF) is established by replacing $\max$/$\min$ with softmax/softmin operators respectively. 
    Such a two-step approximation method of SDF is computationally efficient since there are no convex optimizations embedded.
    It is natural to be aware that the approx-SDF has the potential to be a CBF satisfying the above two requirements. 
    However, the error between the approx-SDF and the original SDF is not discussed, in other words, the conservatism of choosing this approx-SDF as a CBF is still unknown.
    
    This paper goes on the topic of CBFs for real-time polygonal obstacle avoidance \cite{thirugnanam2022duality, singletary2022safety, wei2024diffocclusion}, and inspired by but different from the approx-SDF \cite{han2023efficient}, 
    we propose an optimization-free smooth CBF, where its non-conservatism is carefully studied and guaranteed.
    A brief comparison with existing methods is concluded in Tab. \ref{tab:comparisons}. 
    In detail, inspired by \cite{han2023efficient}, a lower bound of SDF between polygons is proposed and expressed as a nested Boolean logic composition at first, then the smooth approximation of the lower bound is established by applying the recently proposed softening method for any nested Boolean composition in \cite{molnar2023composing}, where the approximate error bound can be tuned by user-defined parameters. 
    The main contributions of this paper can be summarized as follows:
    
    1) Two important but not obvious facts are proved: 
      The proposed lower bound can represent the safety set non-conservatively, i.e., it has the same zero-level/-superlevel sets as SDF in Theorem \ref{the:the-signed-distance-is}, and it is  a nonsmooth barrier function candidate in the sense of  \cite{glotfelter2017nonsmooth}, as proved in Theorem \ref{lem:the-function-sd-candidate-NCBF}.
     
    2) It is proved that the proposed smooth approximated SDF could be chosen as a continuously differentiable CBF for
     polygons with single-integrator dynamics, based on which a specified CBF-QP safety filter is proposed to solve polygonal collision avoidance with provable safety, as concluded in Theorem \ref{the:conclusion}.
     
    3) Although the proposed method is formally stated for single-integrator dynamics, two more challenging  numerical simulation examples are given to show that it can be extended to underactuated
     nonholonomic vehicles \cite{dixon2001nonlinear} even more complex underactuated Euler-Lagrangian systems \cite{spong1998underactuated, yang2021adaptive} such as a container crane. 
     
    The paper is organized as follows. The CBF mwthod is briefly reviewed in Section \ref{sec:background}, and 
    problem formulation is given in Section \ref{sec:problem}. 
    In Section \ref{sec:synthesizing}, synthesizing a Boolean nonsmooth barrier function candidate is shown. The induced continuously differentiable CBF and 
    safety filter is shown in Section \ref{sec:safetyfilter}. Numerical simulation examples are performed in Section \ref{sec:numerical-simu}. 
    \begin{table*}[]
    \begin{center}
      \caption{  {A brief comparison between the existing methods  for PCA and the proposed one. For simplicity, the words  differentiable and optimization are abbreviated as diff and opti respectively. Since the expression of the proposed CBF 
      $\hat{h}_{a} $  is explicit, the values of itself $\hat{h}_{a} $ and  its gradient $\nabla \hat{h}_{a} $ can be calculated analytically and more efficiently.}}
      {\begin{tabular}{|l|l|l|l|l|l|}
      \hline
    Methods &  Smoothness  &  Conservatism  & Explicitness    & Applicable scene \\ \hline
    MDF-based CBF \cite{thirugnanam2022duality}      & Nonsmooth 
    & Nonconservative  &  Implicit (i.e., opti-embedded)   & Polytopes       \\ \hline 
    SDF-linearized CBF \cite{singletary2022safety}  & Smooth  & Conservative  &  Implicit  (i.e., opti-embedded)  &   Polytopes       \\ \hline
      Diff-opti CBF \cite{wei2024diffocclusion}  & Smooth  & Parameter-adjustable    &    Implicit   (i.e., opti-embedded)  & Polygons     \\ \hline
      Opti-free CBF $\hat{h}_{a} $ (proposed)    & Smooth   & Parameter-adjustable &  Explicit (i.e., opti-free)   &  Polygons           \\ \hline
      \end{tabular}
      \label{tab:comparisons}}
      \vspace{-0.6cm}
    \end{center}
    \end{table*}
    
    \section{Background}\label{sec:background}
    \subsection{{Control Barrier Function}}
     
    Consider affine control systems with state ${x \in \R^{n'}}$ and input ${u \in \R^{m'}}$:
    \begin{equation}
        \dot{x} = f(x) + g(x) u,
      \label{eq:system}
    \end{equation}
    where ${f: \R^{n'} \to \R^{n'}}$ and ${g: \R^{n'} \to \R^{n' \times m'}}$ are locally Lipschitz continuous.
    Given a control law ${k}({x})$,  
    the resulting closed-loop system is 
     \begin{align}
      \Sigma: \dot{{x}}={f}({x})+{g}({x}){k}({x}).
     \label{eq:closedloop}
     \end{align}
    For any initial condition ${x(0) = x_0}$, one assumes that the system~$\Sigma$ has a unique solution ${x(t)}$ existing for all ${t \geq 0}$.
  
    \begin{definition}\label{def:NBFcandidate}
    Given a nonempty closed set $\mathcal{C} \subset  \mathbb{R}^{n'}$ with no isolated point, 
    a  locally Lipschitz continuous  function $h: \mathbb{R}^{n'} \rightarrow \mathbb{R}$ is called a  nonsmooth barrier function (NBF) candidate on set $\mathcal{C}$,  if 
    $h(x) =0,~ \forall x\in \partial \mathcal{C}$; $h(x) >0,~ \forall x\in \operatorname{Int}(\mathcal{C})$; and $h(x) <0,~ \forall x\in \mathcal{C}^{c}:=\mathbb{R}^{n'}/\mathcal{C}$. 
    \end{definition}
   
    Definition \ref{def:NBFcandidate} is a tiny modified version of \cite[Definition 3]{glotfelter2017nonsmooth}. The condition $h(x) <0,~ \forall x\in \mathcal{C}^{c}$ is added to highlight that 
    $\mathcal{C}^{c}=\left\{x:  h(x) < 0\right\}\neq \emptyset$, which is essential to guarantee the stability of the set $\mathcal{C}$ \cite{ames2016control}.  To provide controllers with formal safety guarantees, the control barrier function method \cite{ames2016control} is introduced.
    
    \begin{definition}\label{def:cbf} 
    A continuously differentiable BF candidate $h: \mathbb{R}^{n'} \rightarrow \mathbb{R}$ on the  closed set $\mathcal{C} \subset  \mathbb{R}^{n'}$ is a valid control barrier function (CBF) for system \eqref{eq:system} on $\mathcal{D}$,  if there exists 
     an extended class-$\mathcal{K}$ function $\alpha: \mathbb{R} \rightarrow \mathbb{R}$, 
      an open set $\mathcal{D}$ that satisfies  $\mathcal{C} \subset \mathcal{D} \subseteq \mathbb{R}^{n'}$, such that for all $x \in \mathcal{D}$, the set $K(x)$ is non-empty, i.e., 
    \begin{equation}
     K(x):=\{ u\in \R^{m'}: \dot{h}(x,u) \geq-\alpha(h({x}))\} \neq \emptyset, \label{eq:minmathcalLF}
    \end{equation}
    where  $\dot{h}(x,u) = \nabla h(x)^{\top} (f(x) + g(x) u)$. 
    \end{definition}

    \begin{lemma} \label{lem:set-invariance-and-stability}
    If $h$ is a continuously differentiable CBF for system \eqref{eq:system} on $\mathcal{D}$, 
    then any locally Lipschitz continuous controller $k(\cdot)$ satisfying $k(x)\in K(x), \forall x \in \mathcal{D}$ renders: 
    the closed set $\mathcal{C}$ is forward invariant for \eqref{eq:closedloop}, i.e., $x_0 \in \mathcal{C}$ implies that $x(t) \in \mathcal{C},~\forall t \geq 0$ for every solution. 
    \end{lemma}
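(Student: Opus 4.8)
The plan is to reduce the set-invariance claim to a scalar differential inequality along closed-loop trajectories and then invoke a comparison argument. First I would record that, under Definition~\ref{def:NBFcandidate}, the set is the zero-superlevel set $\mathcal{C} = \{x : h(x) \ge 0\}$, with $\partial\mathcal{C} = \{x : h(x) = 0\}$ and $\operatorname{Int}(\mathcal{C}) = \{x : h(x) > 0\}$. Since $f$ and $g$ are locally Lipschitz and $k(\cdot)$ is locally Lipschitz by hypothesis, the closed-loop vector field $f + g k$ in \eqref{eq:closedloop} is locally Lipschitz; hence the standing assumption of a unique solution $x(t)$ applies, so the phrase ``every solution'' in fact refers to the unique one.

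Next, fixing $x_0 \in \mathcal{C}$, I would set $\eta(t) := h(x(t))$. Because $h$ is continuously differentiable and $x(\cdot)$ is $C^1$ on its interval of existence, $\eta$ is differentiable with $\dot\eta(t) = \nabla h(x(t))^{\top}\bigl(f(x(t)) + g(x(t)) k(x(t))\bigr)$. As long as $x(t) \in \mathcal{D}$, the hypothesis $k(x) \in K(x)$ together with \eqref{eq:minmathcalLF} yields the differential inequality $\dot\eta(t) \ge -\alpha(\eta(t))$, which is the crux that couples the CBF condition to the evolution of $h$ along the flow.

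I would then compare $\eta$ with the scalar initial-value problem $\dot y = -\alpha(y)$, $y(0) = \eta(0) = h(x_0) \ge 0$. Because $\alpha$ is an extended class-$\mathcal{K}$ function, $\alpha(0) = 0$ and $\alpha$ is strictly increasing, so $y \equiv 0$ is an equilibrium and any solution starting from $y(0) \ge 0$ remains nonnegative for all time: on $\{y > 0\}$ one has $\dot y = -\alpha(y) < 0$, and the flow cannot cross the equilibrium at the origin. The comparison lemma then gives $\eta(t) \ge y(t) \ge 0$, i.e.\ $h(x(t)) \ge 0$, on every subinterval where the trajectory stays in $\mathcal{D}$.

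The final and most delicate step is to promote this conditional estimate to invariance for all $t \ge 0$. The subtlety is that the CBF inequality is only guaranteed on $\mathcal{D}$, so I must preclude the trajectory from leaving $\mathcal{D}$. Since $\mathcal{C} \subset \mathcal{D}$ with $\mathcal{D}$ open, and since the estimate above shows $h(x(t)) \ge 0$ (hence $x(t) \in \mathcal{C} \subset \mathcal{D}$) precisely on the maximal interval where $x(t) \in \mathcal{D}$, a standard continuity/contradiction argument shows the trajectory can never reach $\partial\mathcal{D}$ while inside $\mathcal{C}$; combined with the global existence guaranteed by the standing assumption, this forces $x(t) \in \mathcal{C}$ for all $t \ge 0$. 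I expect the main obstacle to be not the comparison estimate itself but this careful bookkeeping that the locally-valid (on $\mathcal{D}$) CBF condition propagates to a global-in-time invariance conclusion.
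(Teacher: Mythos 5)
Your proof is correct, but note that the paper itself does not prove this lemma at all: it is stated as a known result imported from the CBF literature (Ames et al.\ \cite{ames2016control}), so there is no in-paper argument to compare against. What you have written is essentially the standard proof that those references supply: reduce to the scalar inequality $\dot\eta \ge -\alpha(\eta)$ along the closed-loop flow, compare against $\dot y = -\alpha(y)$, and then handle the fact that the CBF condition is only available on $\mathcal{D}$. Your final bookkeeping step is sound: on the maximal interval where $x(t)\in\mathcal{D}$ you get $x(t)\in\mathcal{C}$, and if that interval were finite, closedness of $\mathcal{C}$ plus $\mathcal{C}\subset\mathcal{D}$ with $\mathcal{D}$ open and global existence of the solution would let you extend it, a contradiction; hence invariance holds for all $t\ge 0$.

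One technical caveat deserves mention. The textbook comparison lemma (e.g.\ Khalil, Lemma 3.4) assumes the right-hand side of the comparison ODE is locally Lipschitz, whereas an extended class-$\mathcal{K}$ function is only guaranteed continuous and strictly increasing, so $\dot y=-\alpha(y)$ may lack unique solutions and your phrase ``the flow cannot cross the equilibrium'' is not automatic from uniqueness. This is easily repaired, and in fact you can bypass the comparison lemma entirely: if $\eta(t_2)<0$ for some $t_2$ in the interval where the inequality holds, set $t_1=\sup\{t\le t_2:\eta(t)\ge 0\}$; then $\eta(t_1)=0$, $\eta<0$ on $(t_1,t_2]$, and there $\dot\eta(t)\ge-\alpha(\eta(t))>0$ because $\alpha$ is negative on negative arguments, so $\eta$ is strictly increasing on $(t_1,t_2]$ and continuity forces $\eta(t_2)>\eta(t_1)=0$, a contradiction. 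With that substitution (or with the additional standing assumption, common in the cited references, that $\alpha$ is locally Lipschitz), your argument is complete.
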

    
    The inequality in \eqref{eq:minmathcalLF} is often used as a constraint in optimization to synthesize safe controllers.
    For example, any Lipschitz continuous controller ${u_{0}(\cdot): \mathbb{R}^{n'} \to \mathbb{R}^{m'} }$ can be modified to a safe controller via the {\em quadratic program (QP)}:
    \begin{align}
    \begin{split}
        k(x) = \underset{u \in \mathbb{R}^{m'}}{\operatorname{argmin}} & \quad \| u - u_{0}(x) \|^2, \\
        \text{s.t.} & \quad  \dot{h}(x,u) \geq-\alpha(h({x})). 
    \end{split}
    \label{eq:QP}
    \end{align}
    This CBF-QP control paradigm is also known as {\em safety filter}. 
    
    \subsection{{Signed Distance Field}}\label{sec:sdf}
    
    The concept of signed distance field (SDF) can be introduced for any pair of convex sets  $\mathcal{P}^i, \mathcal{P}^j$. In detail, 
    the SDF between $\mathcal{P}^i, \mathcal{P}^j$ is defined as \cite{singletary2022safety,dai2023safe, cameron1986determining,  han2023efficient}:   
    \begin{equation}\label{eq:sd}
    \begin{aligned}
     \operatorname{sd}(\mathcal{P}^i, \mathcal{P}^j) &:= \operatorname{dist}(\mathcal{P}^i, \mathcal{P}^j) -  \operatorname{pen}(\mathcal{P}^i,\mathcal{P}^j),  \\
      \operatorname{dist}(\mathcal{P}^i, \mathcal{P}^j) 
    &:=\inf_{{t}}\{\|{t}\|:(\mathcal{P}^i+{t}) \cap  \mathcal{P}^j \neq \emptyset\}, \\
    \operatorname{pen}(\mathcal{P}^i, \mathcal{P}^j)
    &:= \inf _{{t}}\{\|{t}\|:(\mathcal{P}^i+{t}) \cap \mathcal{P}^j=\emptyset\}.
    \end{aligned}
    \end{equation}   
    In the above, $ \operatorname{dist}(\cdot)$ gives a nonnegative distance. 
    As concluded in \cite{singletary2022safety}, 
    it is more advantageous to pick $ \operatorname{sd}(\cdot)$ as a CBF than  $ \operatorname{dist}(\cdot)$, 
    since $ \operatorname{sd}(\cdot)$ is negative in the event of a collision, which can force the robot to re-approach the collision-free region.  
    If someone introduces the Minkowski sum
    $\mathcal{P}^j + \mathcal{P}^i 
    =\{a^j + a^i: a^j \in \mathcal{P}^j, a^i \in \mathcal{P}^i \}$ and 
    defines the set $- \mathcal{P}^i:= \{- a^i: a^i \in \mathcal{P}^i \}$, then the Minkowski difference
     of $\mathcal{P}^i,\mathcal{P}^j$ is defined as follows:  
    \begin{equation}\label{eq:mathcaiPij}
    \begin{aligned}
    \mathcal{P} &: = \mathcal{P}^j + (- \mathcal{P}^i) =\left\{a^j-a^i: a^j \in \mathcal{P}^j, a^i \in \mathcal{P}^i \right\}. 
    \end{aligned}
    \end{equation}
    By introducing the distance from origin $0$ to any closed set ${\mathcal{Q}}$: $d({0}, {\mathcal{Q}}):=\inf_{q\in \mathcal{Q}} \|0-q\| $, 
    one has 
\begin{equation}\label{eq:min-dist-primal2}
\operatorname{sd}(\mathcal{P}^i, \mathcal{P}^j) =  \operatorname{sd}({0},  \mathcal{P} ) 
=  \left\{ \begin{array}{l}~~d({0}, \partial {\mathcal{P}}),   {0} \notin \mathcal{P}, \\-d({0}, \partial {\mathcal{P}}),   {0} \in \mathcal{P} .
    \end{array} \right. 
\end{equation}  
The graphic illustration of Eq. \eqref{eq:sd}-\eqref{eq:min-dist-primal2} is given in Fig. \ref{sdf_minkowski}.  For any two polytopes (convex sets) $\mathcal{P}^i, \mathcal{P}^j$, their difference  $\mathcal{P}$ is still a polytope (convex set) \cite{cameron1986determining,  han2023efficient}.

\begin{figure}[!htp]
  \centering
       \vspace{-0.3cm}
       \includegraphics[width=0.7\linewidth]{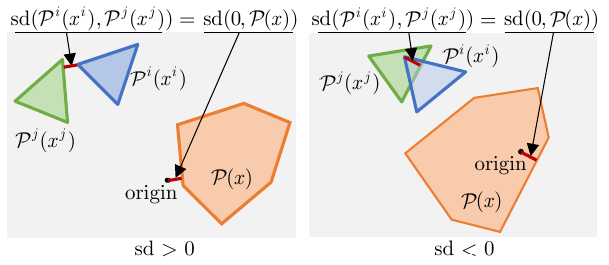}
       \caption{{The illustration of SDF between polygons (taking triangles for example), where $\mathcal{P}(x) 
    = \mathcal{P}^j(x^j) - \mathcal{P}^i(x^i)  $ denoting the Minkowski difference of $\mathcal{P}^i(x^i)$ and $\mathcal{P}^j(x^j)$.} }
       \label{sdf_minkowski}
         \vspace{-0.2cm}
\end{figure}
    
Another formula often used in the calculation and estimation of SDF is also introduced \cite{singletary2022safety,dai2023safe}:
$$\operatorname{sd}({0},  \mathcal{P} ) 
      = \max_{a^{\top} a=1}  \min _{\omega \in \mathcal{P}} ~ a^{\top}  \omega
      = \max_{a^{\top} a=1} - h_{\mathcal{P}}(a), $$
where $h_{\mathcal{P}}(a)=\max_{\omega \in \mathcal{P}} ~ a^{\top}  \omega$ represents the support function of the set $\mathcal{P}$ with outer normal vector $a$.

Specially, the SDF from a point $v$ to a halfspace $\mathbb{H}$ is 
\begin{align} 
  \operatorname{sd}(v,\mathbb{H})=\operatorname{sd}({0},v-\mathbb{H})  = a^{\top}v - b,  \label{eq:opersdvH} 
\end{align}
where $\mathbb{H}:=\left\{z: a^{\top} z \leq b \right\}$ with $\|a\|=1$ and  $b\in \mathbb{R}$. 
      
\section{Problem Formulation}\label{sec:problem}
    
Polygonal collision avoidance is short for the problem of collision avoidance between two polygons that own their dynamic equations. 
For simplicity of discussions,  consider two robots with the same single-integrator dynamics:
\begin{equation} \label{eq:N-affine-systems}
    \dot{x}^\mathfrak{i}(t) = u^\mathfrak{i}(t), \quad \mathfrak{i} \in \{i,j\},
\end{equation} 
where $x^\mathfrak{i} \in \R^{n} $,  $u^\mathfrak{i} \in  \R^{n}$ are the state and input associated with the  $\mathfrak{i}$-th robot ($n\geq2$). And the physical domain/shape associated to the $\mathfrak{i}$-th robot at $x^\mathfrak{i}$ is denoted as polygon $\mathcal{P}^\mathfrak{i}(x^\mathfrak{i})\subset \mathbb{R}^2$, as defined in Section \ref{sec:polygons}, after which the formal problem statement is shown in Section \ref{sec:problem-statement}.

\subsection{Expressions of Polygons}\label{sec:polygons}
    
Polygons, i.e., polytopes in planar, have a simpler geometric structure than generic polytopes.  From Euler's formula \cite[Theorem 2.3.10]{weibel2007minkowski}, polygons have as many vertices as edges. Let $r^{{\mathfrak{i}}}$ denote the number of vertices and edges of $\mathcal{P}^{\mathfrak{i}}(x^{\mathfrak{i}})$ for ${\mathfrak{i}}\in \{i,j\}$.  The halfspace($\mathcal{H}$)-representation is introduced to express $\mathcal{P}^{\mathfrak{i}}(x^{\mathfrak{i}})$: 
\begin{subequations} \label{eq:polytope-geometry}
\begin{align}
  \mathcal{P}^{\mathfrak{i}}(x^{\mathfrak{i}})  &= \left\{z: A^{\mathfrak{i}}(x^{\mathfrak{i}})z \leq b^{\mathfrak{i}}(x^{\mathfrak{i}}) \right\}  =  \bigcap_{k^{\mathfrak{i}}\in [r^{{\mathfrak{i}}}] }  \mathcal{P}^{{\mathfrak{i}}}_{k^{\mathfrak{i}}}(x^{\mathfrak{i}}) , \label{eq:polytope-geometry2} \\
  \mathcal{P}^{{\mathfrak{i}}}_{k^{\mathfrak{i}}}(x^{\mathfrak{i}}) &:= \left\{z \in \mathbb{R}^{2}:   (A^{{\mathfrak{i}}}_{k^{\mathfrak{i}}}(x^{\mathfrak{i}}))^{\top} z \leq b^{{\mathfrak{i}}}_{k^{\mathfrak{i}}}(x^{\mathfrak{i}})\right\}, \label{eq:polytope-half-space}
\end{align}
\end{subequations}
where  $A^{\mathfrak{i}}(x^{\mathfrak{i}}): \mathbb{R}^{n} \rightarrow \R^{r^{{\mathfrak{i}}} \times 2}$ and $b^{\mathfrak{i}}(x^{\mathfrak{i}}):  \mathbb{R}^{n}  \rightarrow \R^{r^{{\mathfrak{i}}}}$ represent the half spaces that define the geometry of robot ${\mathfrak{i}}$, and
    $(A^{{\mathfrak{i}}}_{k^{\mathfrak{i}}})^{\top},  b^{{\mathfrak{i}}}_{k^{\mathfrak{i}}}$ represent the $k^{\mathfrak{i}}$-th row of $ A^{{\mathfrak{i}}},  b^{{\mathfrak{i}}}$  respectively. 
For convenience, the equivalent\footnote{In this paper, both $\mathcal{H}$, $\mathcal{V}$-representations are considered to be known. The transformations from one to another have been well studied \cite{weibel2007minkowski}.} vertex($\mathcal{V}$)-representation of $\mathcal{P}^{\mathfrak{i}}(x^{\mathfrak{i}})$ is also introduced:
\begin{equation} \label{eq:polytope-geometry-2}
  \mathcal{P}^{\mathfrak{i}}(x^{\mathfrak{i}})  = \operatorname{co}\{\mathcal{V}^{\mathfrak{i}}(x^{\mathfrak{i}})\}, ~\mathcal{V}^{\mathfrak{i}}(x^{\mathfrak{i}}):=\{v^{{\mathfrak{i}}}_{k^{\mathfrak{i}}}(x^{\mathfrak{i}}): k^{\mathfrak{i}} \in  [r^{{\mathfrak{i}}}]\}, 
\end{equation}
where $\operatorname{co}\{\mathcal{V}^{\mathfrak{i}}\}$ denotes the convex hull of the vertex set $\mathcal{V}^{\mathfrak{i}}$, and $v^{{\mathfrak{i}}}_{k^{\mathfrak{i}}}(\cdot): \mathbb{R}^{n} \rightarrow \R^{2}$ denotes the $k^{\mathfrak{i}}$-th vertex of $\mathcal{P}^{\mathfrak{i}}(\cdot)$.  
The common assumptions are introduced for the following development. 
    
\begin{assumption}\label{eq:polytope-def}
For ${\mathfrak{i}}\in \{i,j\}$, the polygon $\mathcal{P}^{\mathfrak{i}}(x^{\mathfrak{i}})$ is always convex, bounded, and with a non-empty interior for all $x^{\mathfrak{i}}$. Additionally, it holds that:  \\
    1) Functions $A^{{\mathfrak{i}}}(\cdot),   b^{{\mathfrak{i}}}(\cdot)$  are continuously differentiable, and 
    the set of inequalities $A^{\mathfrak{i}}(x^{\mathfrak{i}})z \leq b^{\mathfrak{i}}(x^{\mathfrak{i}})$ does not contain any redundant inequality. \\
    2) Each column of $A^{\mathfrak{i}}(x^{\mathfrak{i}})$ belongs to the unit sphere $\mathbb{S}^{1}:=\{a\in\mathbb{R}^{2}:a^{\top}a=\|a\|^2=1     \}$. \\
    3) Functions $v^{{\mathfrak{i}}}_{k^{\mathfrak{i}}}, \forall k^{\mathfrak{i}} \in [r^{{\mathfrak{i}}}]$  are continuously differentiable, and 
    the Jacobian  $\partial v^{{\mathfrak{i}}}_{k^{\mathfrak{i}}}:=\frac{\partial v^{{\mathfrak{i}}}_{k^{\mathfrak{i}}}}{\partial x^{\mathfrak{i}}}$ is always row full rank.   
\end{assumption}
    
Conditions 2) and 3) are newly introduced for our development while others in Assumption \ref{eq:polytope-def} are derived from \cite{thirugnanam2022duality}. In detail,  condition 2) is used to simplify expressions without loss of generality;  
in condition 3),  \emph{row full rank} is introduced to guarantee the controllability of ${v}^{{\mathfrak{i}}}_{k^{\mathfrak{i}}}$ in   $\dot{v}^{{\mathfrak{i}}}_{k^{\mathfrak{i}}}= \partial v^{{\mathfrak{i}}}_{k^{\mathfrak{i}}}(x^{\mathfrak{i}})  \dot{x}^{\mathfrak{i}}$.  Next, examples are given to illustrate Assumption \ref{eq:polytope-def} more intuitive.   
     
\begin{example}[Polygonal rigid bodies]\label{exa:rigid_poly} 
Consider a polygonal rigid body $\mathcal{E}^\mathfrak{i} \subset \mathbb{R}^2$ centered at  point $p^\mathfrak{i} \in \mathbb{R}^2$ with attitude matrix $R(\theta^\mathfrak{i})=[\cos \theta^\mathfrak{i}, \sin \theta^\mathfrak{i}; - \sin \theta^\mathfrak{i}, \cos \theta^\mathfrak{i}]$ parameterized by angle $\theta^\mathfrak{i}$, i.e., $\mathcal{E}^\mathfrak{i}=p^\mathfrak{i} + R(\theta^\mathfrak{i}) \mathcal{G}^{\mathfrak{i}} $. 
Suppose $r^\mathfrak{i}$ is the number of vertexes of $ \mathcal{G}^{\mathfrak{i}}$ and the coordinate of the $k^\mathfrak{i}$-th vertex of $ \mathcal{G}^{\mathfrak{i}}$  in the body frame is  ${l}_{k^\mathfrak{i}}$ and ${l}_1, {l}_2, \ldots, {l}_{r^\mathfrak{i}}$ are   sorted clockwise with ${l}_{r^\mathfrak{i}+1}={l}_1$, then $\mathcal{V}$-representation is 
$$ \mathcal{E}^\mathfrak{i}({p}^\mathfrak{i},\theta^\mathfrak{i} ) =\left\{ {v}_{k^\mathfrak{i}}={p}^\mathfrak{i}+{R}(\theta^\mathfrak{i}) {l}_{k^\mathfrak{i}} \in \mathbb{R}^{2}, k^\mathfrak{i}=1,2, \ldots, r^\mathfrak{i} \right\},$$ 
and the $\mathcal{H}$-representation is determined as follows:
\begin{subequations}\label{mathbfHehe}
\begin{align}
      A^\mathfrak{i}_{k^\mathfrak{i}}(p^\mathfrak{i},\theta^\mathfrak{i})  & 
      :=\frac{{B}\left({v}_{k^\mathfrak{i}+1}-{v}_{k^\mathfrak{i}}\right)}{\left\|{v}_{k^\mathfrak{i}+1}-{v}_{k^\mathfrak{i}}  \right\|}
      = \underbrace{{B}{R}(\theta^\mathfrak{i})}_{{R}(\theta^\mathfrak{i}+\frac{\pi}{2})}   \frac{\Delta {l}_{k^\mathfrak{i}} }{\left\|\Delta {l}_{k^\mathfrak{i}} \right\|} 
      , \label{mathbfHe} \\  
      b^\mathfrak{i}_{k^\mathfrak{i}}(p^\mathfrak{i},\theta^\mathfrak{i}) & 
      := A_{k^\mathfrak{i}}^{\top}{v}_{k^\mathfrak{i}}  
      = A_{k^\mathfrak{i}}^{\top} \left( {p}^\mathfrak{i}+{R}({\theta}^\mathfrak{i}) {l}_{k^\mathfrak{i}}  \right), \label{mathbfhe}
\end{align}
\end{subequations}
where ${B}:=R(\frac{\pi}{2})=\left[0, -1; 1,0\right]$ and $\Delta {l}_{k^\mathfrak{i}}:= {l}_{k^\mathfrak{i}+1}- {l}_{k^\mathfrak{i}}$.  Denoting $x^\mathfrak{i}=(p^\mathfrak{i},\theta^\mathfrak{i})$, one obtains 
$$\begin{aligned}
  \frac{\partial A^\mathfrak{i}_{k^\mathfrak{i}}}{\partial x^\mathfrak{i}} & =\left[0,0, R({\pi}/{2}) A^\mathfrak{i}_{k^\mathfrak{i}} \right],  ~ \frac{\partial v^\mathfrak{i}_{k^\mathfrak{i}}}{\partial x^\mathfrak{i}}=\left[ I_2, R(\theta^\mathfrak{i} + {\pi}/{2}) l_{k^\mathfrak{i}} \right],  \\
 \frac{\partial b^\mathfrak{i}_{k^\mathfrak{i}}}{\partial x^\mathfrak{i}}  &= \left[ (A^\mathfrak{i}_{k^\mathfrak{i}})^{\top}, \left((v^\mathfrak{i}_{k^\mathfrak{i}})^{\top}R(\theta^\mathfrak{i}+\pi) + l_{k^\mathfrak{i}} ^{\top}\right)  \Delta l_{k^\mathfrak{i}}/\|\Delta l_{k^\mathfrak{i}}\| \right].   
\end{aligned} $$
Based on the above expressions,  it is not difficult to verify the conditions  in  Assumption \ref{eq:polytope-def}. 
\end{example}
    
\begin{example}[Polygonal formation]\label{exa:formation} 
Consider three agents with position $v_\mathfrak{i}\in\mathbb{R}^2,\mathfrak{i}=1,2,3$. Let $x=[v_1^{\top},v_2^{\top},v_3^{\top}]^{\top}$, 
then the physical domain occupied by the triangular formation can be regarded as $\mathcal{E}(x)=\operatorname{co}\{v_1,v_2,v_3\}$.  Similar to 
Example \ref{exa:rigid_poly},  the $\mathcal{H}$-representation of $\mathcal{E}(x)$ can be obtained:   
\begin{align*}
      A_\mathfrak{i}(x)  & := {{B} \Delta v_\mathfrak{i}}/{\left\|\Delta v_\mathfrak{i} \right\|} , ~ b_{\mathfrak{i}}(x) 
      := A_{\mathfrak{i}}(x)^{\top}{v}_{\mathfrak{i}}, 
\end{align*}
for all $\mathfrak{i}=1,2,3$, where $\Delta v_\mathfrak{i}:={v}_{\mathfrak{i}+1}-{v}_{\mathfrak{i}} $ and $v_4=v_1$. If they are not located in the same line, $\mathcal{E}(x)$  satisfies all conditions in Assumption \ref{eq:polytope-def} obviously.
\end{example}
    
\subsection{Problem Statement}\label{sec:problem-statement}
    
For simplicity of notation, let $x := [(x^i)^{\top},(x^j) ^{\top}]^{\top}\in \mathbb{R}^{2n}$, $u := [(u^i)^{\top}, (u^j )^{\top}]^{\top}$, then  the evolution of  systems $i$ and $j$ in \eqref{eq:N-affine-systems} can be abbreviated as  
\begin{equation} \label{eq:dotxfxgxu}
\dot{x}= u.
\end{equation} 
Define a safe set $\mathcal{S}$ as the  zero-superlevel set of the signed distance between robots $i$ and $j$: 
\begin{equation} \label{eq:def-safe-set}
\mathcal{S}:= \{ x: h_s(x) \geq  0 \},
\end{equation}
where $ h_s(x):=\operatorname{sd}(\mathcal{P}^i(x^i), \mathcal{P}^j(x^j))$, as defined in \eqref{eq:sd}. The closed-loop system is considered safe w.r.t. $\mathcal{S}$ if the obstacle collision is avoided, i.e., 
\begin{equation} \label{eq:def-safe-set2}
    x(t)=  \left[(x^i(t))^{\top},(x^j(t))^{\top}\right]^{\top} \in \mathcal{S}, ~ \forall \; t\geq 0. 
\end{equation}

\textbf{Polygonal Collision Avoidance:} 
Consider the system \eqref{eq:N-affine-systems} whose shapes are polygons \eqref{eq:polytope-geometry} that satisfy Assumption \ref{eq:polytope-def}. The goal is to 
design a control law for $u$ such that the closed-loop system is safe w.r.t. $\mathcal{S}$ in the sense of \eqref{eq:def-safe-set2}. 

\begin{remark}
The studied problem is similar to the polytopic/polygonal collision avoidance in \cite{thirugnanam2022duality, singletary2022safety, wei2024diffocclusion}, and for brevity, it is assumed that both two subsystems have single-integrator dynamics. 
It will be shown in Section \ref{sec:numerical-simu} that the proposed solution could also be extended to nonlinear systems even two-order underactuated systems.  $\hfill\square$ 
\end{remark}

\section{Synthesizing a Boolean NBF Candidate}\label{sec:synthesizing} 
In this section, the detailed procedure of synthesizing a Boolean nonsmooth barrier function candidate is shown, which is composed of two main steps: establishing a lower bound of SDF and then building a boolean NBF candidate from such a lower bound. 
   
\subsection{Establishing a lower bound of SDF}
 We begin the development with \eqref{eq:min-dist-primal2}. First, associated with the support function $h_{\mathcal{P}}(a)$, the support halfspace of the set $\mathcal{P}$ with outer normal vector $a$ is introduced: 
\begin{equation} \label{eq:support-halfspace}
     H^{-}_{\mathcal{P}}(a) :=\left\{\omega \in \mathbb{R}^{2}: a^{\top} \omega \leq h_{\mathcal{P}}(a)\right\}, ~\forall a \in \mathbb{S}^{1}.
\end{equation} 
For such a set whose boundary is a hyperplane, it is easy to calculate its SDF at origin  via \eqref{eq:opersdvH}:   
\begin{equation*} 
\operatorname{sd}({0}, H_{\mathcal{P}}^{-}(a)) =a^{\top} 0 - h_{\mathcal{P}}(a) =- h_{\mathcal{P}}(a). 
\end{equation*} 
Based on the formula \eqref{eq:min-dist-primal2}, the SDF $h_{s}(x)$ in \eqref{eq:def-safe-set} can be calculated by a mini-maximum optimization: 
\begin{equation*} 
  h_{s}(x) =  \max_{\|a\|=1} \min _{\omega \in \mathcal{P}(x)}   a^{\top}  \omega
  = \max_{a^{\top}a=1}   \operatorname{sd}({0}, H_{\mathcal{P}(x)}^{-}(a)),  \label{eq；hsxmaxa1}
\end{equation*}
which is nonsmooth and hard to calculate exactly due to the $\max$/$\min$ operators on compact sets \cite{singletary2022safety,dai2023safe}. 
    
To address this, other facts about the difference $\mathcal{P}$ is illustrated first. Combing \eqref{eq:polytope-geometry2} with \eqref{eq:mathcaiPij}, it leads to 
\begin{subequations}  \label{eq:mathcalPij}
\begin{align} 
\mathcal{P} &=  \mathcal{P}^j - \bigcap_{k^i\in [r^{i}] }  \mathcal{P}^{i}_{k^i}
    \subseteq \bigcap_{k^i\in [r^{i}] } \left( \mathcal{P}^j - \mathcal{P}^{i}_{k^i} \right) ,\\
\mathcal{P} &=  \bigcap_{k^j\in [r^{j}] }    \mathcal{P}^{j}_{k^j} - \mathcal{P}^i 
    \subseteq  \bigcap_{k^j\in [r^{j}] }   \left( \mathcal{P}^{j}_{k^j} - \mathcal{P}^i \right) , 
\end{align}
\end{subequations}
where the set inclusion "$\subseteq$" comes from the fact \cite[Eq. (3.2)]{schneider2014convex}: $(A\cap B)-C \subseteq (A-C)\cap(B-C)$ holds  for arbitrary subsets $A,B,C$. To avoid the redundancy of symbols, sets $ \mathcal{P}^j - \mathcal{P}^{i}_{k^i} $ with  index ${k^i}$ and $\mathcal{P}^{j}_{k^j} - \mathcal{P}^i$ with index ${k^j}$, are together renumbered as sets  $\mathcal{Q}_{k}$ with a single index $k$:   
\begin{align*}
  \mathcal{Q}_{k}(x) :=
     \left\{\begin{array}{l}
    \mathcal{P}^j(x^j) - \mathcal{P}^{i}_{k^i}(x^i), k={k^i}, ~~~~~~
    \forall {k^i} \in [r^{i}], \\
    \mathcal{P}^{j}_{k^j}(x^j) - \mathcal{P}^i(x^i), k= r^{i} + {k^j}, 
      \forall {k^j} \in [r^{j}],
\end{array}\right. 
\end{align*}
 based on which \eqref{eq:mathcalPij} can be rewritten as 
    \begin{align} 
    \mathcal{P}(x) 
    &\subseteq   \bigcap_{k \in [r^{i}+ r^{j}]}  \mathcal{Q}_{k}(x).  
     \label{eq:mathcalPij2}
    \end{align}
Recalling the fact about SDF in \cite[Theorem 2.1]{delfour2011shapes} that: 
    $A\subseteq B \Rightarrow  \operatorname{sd} ({0},  A) \geq  \operatorname{sd} ({0},  B)$, the following inequality holds naturally. 
   
    \begin{proposition}\label{lem:hsxhax}
    The following defined function $h_{a}(x)$ is one lower boundary of $h_{s}(x)$: 
    \begin{equation} \label{eq:mathcalPij4}
    \begin{aligned} 
    h_{s}(x) &= \operatorname{sd} (0, \mathcal{P}(x))
     = \max_{a^{\top}a=1}   \operatorname{sd}({0}, H_{\mathcal{P}(x)}^{-}(a)) \\
    &\geq \max_{k \in [r^{i}+ r^{j}]} \operatorname{sd}(0, \mathcal{Q}_{k}(x))=: h_{a}(x).   
    \end{aligned}
    \end{equation}
    \end{proposition}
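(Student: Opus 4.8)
The plan is to read the statement as a chain of two equalities followed by a single inequality, and to dispatch each link using material already assembled earlier in the excerpt. The two equalities carry no new content. The first, $h_{s}(x) = \operatorname{sd}(0,\mathcal{P}(x))$, is exactly the reduction of the SDF between $\mathcal{P}^{i}$ and $\mathcal{P}^{j}$ to a single-set SDF at the origin recorded in \eqref{eq:min-dist-primal2}, combined with the definition of $h_{s}$ in \eqref{eq:def-safe-set}. The second, $\operatorname{sd}(0,\mathcal{P}(x)) = \max_{a^{\top}a=1}\operatorname{sd}(0, H^{-}_{\mathcal{P}(x)}(a))$, follows by combining the support-function characterization $\operatorname{sd}(0,\mathcal{P}) = \max_{a^{\top}a=1} -h_{\mathcal{P}}(a)$ from Section \ref{sec:sdf} with the one-line identity $\operatorname{sd}(0, H^{-}_{\mathcal{P}}(a)) = -h_{\mathcal{P}}(a)$ established just before \eqref{eq:support-halfspace}. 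Hence only the inequality $h_{s}(x) \ge h_{a}(x)$ requires an argument.

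For the inequality, the plan is to convert the set inclusion into an SDF inequality termwise. First I would observe that the inclusion \eqref{eq:mathcalPij2}, $\mathcal{P}(x) \subseteq \bigcap_{k \in [r^{i}+r^{j}]} \mathcal{Q}_{k}(x)$, implies the single-index inclusion $\mathcal{P}(x) \subseteq \mathcal{Q}_{k}(x)$ for every $k$, because an intersection is contained in each of its members. Then I would apply the inclusion-reversing (antitone) property of the signed distance to the origin, cited as $A \subseteq B \Rightarrow \operatorname{sd}(0,A) \ge \operatorname{sd}(0,B)$ from \cite[Theorem 2.1]{delfour2011shapes}, to each inclusion $\mathcal{P}(x) \subseteq \mathcal{Q}_{k}(x)$, which yields $\operatorname{sd}(0,\mathcal{P}(x)) \ge \operatorname{sd}(0,\mathcal{Q}_{k}(x))$ for all $k$. (One could equivalently pass through the full intersection $\bigcap_{k}\mathcal{Q}_{k}(x)$, but the per-$k$ route is shorter.)

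Finally, since $\operatorname{sd}(0,\mathcal{P}(x))$ is a common upper bound of the finite family $\{\operatorname{sd}(0,\mathcal{Q}_{k}(x))\}_{k \in [r^{i}+r^{j}]}$, it also bounds their maximum, giving $h_{s}(x) = \operatorname{sd}(0,\mathcal{P}(x)) \ge \max_{k \in [r^{i}+r^{j}]} \operatorname{sd}(0,\mathcal{Q}_{k}(x)) = h_{a}(x)$, which closes the proof. I do not expect a genuine obstacle: the geometric heavy lifting (the Minkowski-difference inclusion built from \cite[Eq. (3.2)]{schneider2014convex} and the renumbering into the $\mathcal{Q}_{k}$) is already carried out before the statement, so the proposition reduces to a clean termwise monotonicity step followed by taking a maximum. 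The only points worth stating carefully are the direction of monotonicity — the SDF at the origin is order-reversing, so shrinking the set enlarges (or leaves unchanged) its signed distance — and the finiteness of the index set $[r^{i}+r^{j}]$, which guarantees the maximum is attained and the bound is legitimate.
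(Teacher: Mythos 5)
Your proposal is correct and follows essentially the same route as the paper: the paper establishes the two equalities just before the proposition (via \eqref{eq:min-dist-primal2} and the support-function/halfspace identity) and then obtains the inequality by combining the inclusion \eqref{eq:mathcalPij2} with the antitone property of $\operatorname{sd}(0,\cdot)$ from \cite[Theorem 2.1]{delfour2011shapes}, exactly as you do. The only difference is presentational — the paper leaves the termwise application and the final maximization implicit ("the inequality holds naturally"), whereas you spell them out.
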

    
    Recently, the softening version of function $h_a(x)$ has been applied in \cite{han2023efficient} to substitute $h_s(x)$ to achieve efficient optimization-based trajectory planning. 
    However, the following problems still have not been studied, which are not obvious to answer:   
    \begin{itemize}
      \item [1)] Are the sets $\mathcal{Q}_{k}(x), \forall k \in [r^{i}+ r^{j}]$ some support halfspaces of $\mathcal{P}(x)$? Can the equality in  \eqref{eq:mathcalPij2} hold?
    
      \item [2)] Is $h_{a}(x)$ non-conservative? Namely,  are their zero-superlevel  sets $\mathcal{S}_{a}:=\{x: h_a(x)\geq0\} $ and $\mathcal{S}=\{x: h_s(x)\geq0\}$ the same?
    
      \item [3)] Is it hopeful to choose $ h_{a}(x)$ as a Barrier function candidate on safety set $\mathcal{S}$?   
    \end{itemize}
  The answers to the above three questions can be found in the following Lemma \ref{lem:support-halfspace},  Theorem \ref{the:the-signed-distance-is} and Theorem \ref{lem:the-function-sd-candidate-NCBF} respectively.

\subsection{Building a boolean NBF candidate from the lower bound}\label{sec:cbNBFc}
    
The next attempts to apply the properties of polytopes $\mathcal{P}^{i}, \mathcal{P}^{j}$, so to reformulate the function ${h}_{a}(x)$ as a  
Boolean composition expression.

From \eqref{eq:polytope-geometry} and definitions of Minkowski difference,  it holds  
    \begin{align}
    \mathcal{P}^j - \mathcal{P}^{i}_{k^i}
    &= \left\{z={\omega}^{j} - {\omega}^{i}: {\omega}^{j} \in \mathcal{P}^{j}, {\omega}^{i} \in \mathcal{P}^{i}_{k^i}\right\} \label{eq:matPjPiki} \\
    & =\left\{z: (- A^{i}_{k^i})^{\top} z \leq b^{i}_{k^i} + (-A^{i}_{k^i})^{\top} {\omega}^{j}, {\omega}^{j} \in \mathcal{P}^{j}\right\} \nonumber  \\
    & =\left\{z: (-A^{i}_{k^i})^{\top} z \leq b^{i}_{k^i} + \max_{{\omega}^{j} \in \mathcal{P}^{j}}  (-A^{i}_{k^i})^{\top} {\omega}^{j}  \right\} , \nonumber 
    \end{align}
     for all $k^i\in [r^{i}]$. Similarly,  for all $k^j\in [r^{j}]$, 
    \begin{align}
    \mathcal{P}^{j}_{k^j} - \mathcal{P}^i 
    &= \left\{z={\omega}^{j} - {\omega}^{i}: {\omega}^{j} \in \mathcal{P}^{j}_{k^j}, {\omega}^{i} \in \mathcal{P}^{i}\right\} 
    \label{eq:matPjkjPi} \\
    & =\left\{z: (A^{j}_{k^j})^{\top} z \leq b^{j}_{k^j} -  (A^{j}_{k^j})^{\top} {\omega}^{i}, {\omega}^{i} \in \mathcal{P}^{i}\right\} \nonumber  \\
    & =\left\{z: (A^{j}_{k^j})^{\top} z \leq b^{j}_{k^j} + \max_{{\omega}^{i} \in \mathcal{P}^{i}}   - (A^{j}_{k^j})^{\top} {\omega}^{i}\right\}.  \nonumber 
    \end{align}
    Based on the above expressions and recalling the notation of the support halfspace 
    $  H^{-}_{\mathcal{P}}(\cdot) $ in \eqref{eq:support-halfspace}, one can know from the following lemma that $ \mathcal{P}^j - \mathcal{P}^{i}_{k^i}$, $\mathcal{P}^{j}_{k^j} - \mathcal{P}^i$ are support halfspaces of $\mathcal{P}$ with outer normal vectors $ -A^{i}_{k^i}, A^{j}_{k^j}$ respectively. 
   
    \begin{lemma}\label{lem:support-halfspace} 
    For all $k^i\in [r^{i}]$ and $k^j\in [r^{j}]$, 
    one has 
    \begin{align}
     H_{\mathcal{P}}^{-}( -A^{i}_{k^i} )=\mathcal{P}^j - \mathcal{P}^{i}_{k^i}, ~~
     H_{\mathcal{P}}^{-}(  A^{j}_{k^j} )=\mathcal{P}^{j}_{k^j} - \mathcal{P}^i.  \label{eq:mathcalPkijwu}
    \end{align}
    Collect outer normal vectors of $-\mathcal{P}^i, \mathcal{P}^j$  respectively: 
    \begin{subequations}\label{eq:mthcalN-ij}
    \begin{align}
    \mathcal{N}_{-}^{i}(x^i)&:= \left\{-A^{i}_{k^i}(x^i): {k^i} \in [{r^i}]\right\}, \\
    \mathcal{N}^{j}(x^j) &:= \left\{A^{j}_{k^j}(x^j): {k^j} \in [{r^j}] \right\}, 
    \end{align}
    \end{subequations}
    then the equality in \eqref{eq:mathcalPij2} holds, i.e., 
    \begin{align} 
    \mathcal{P}(x) 
    =\bigcap_{a \in \mathcal{N}(x) }     H^{-}_{\mathcal{P}(x)}(a) 
    = \bigcap_{k \in [r^{i}+ r^{j}]}  \mathcal{Q}_{k}(x) ,  
     \label{eq:mathcalPij5} 
    \end{align} 
    where $\mathcal{N}(x):=\mathcal{N}_{-}^{i}(x^i) \cup  \mathcal{N}^{j}(x^j) $.
    \end{lemma}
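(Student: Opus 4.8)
The plan is to split the lemma into its two assertions: first the two support-halfspace identities in \eqref{eq:mathcalPkijwu}, and then the set equality \eqref{eq:mathcalPij5}. For the first assertion I would work directly from the definition of the support halfspace in \eqref{eq:support-halfspace}, so it suffices to check that the offset appearing in the halfspace representation of $\mathcal{P}^j-\mathcal{P}^{i}_{k^i}$ derived in \eqref{eq:matPjPiki} coincides with the support value $h_{\mathcal{P}}(-A^{i}_{k^i})$. Since $\mathcal{P}=\mathcal{P}^j+(-\mathcal{P}^i)$ is a Minkowski sum, the support function is additive, giving $h_{\mathcal{P}}(-A^{i}_{k^i})=h_{\mathcal{P}^j}(-A^{i}_{k^i})+h_{\mathcal{P}^i}(A^{i}_{k^i})$. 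The first term is exactly $\max_{\omega^j\in\mathcal{P}^j}(-A^{i}_{k^i})^{\top}\omega^j$, while the second equals $b^{i}_{k^i}$: the $k^i$-th inequality of the $\mathcal{H}$-representation is a genuine facet because Assumption \ref{eq:polytope-def}.1 forbids redundant inequalities, so its supporting value is attained and $h_{\mathcal{P}^i}(A^{i}_{k^i})=b^{i}_{k^i}$. This matches the offset in \eqref{eq:matPjPiki}, proving $H^{-}_{\mathcal{P}}(-A^{i}_{k^i})=\mathcal{P}^j-\mathcal{P}^{i}_{k^i}$; the identity for $A^{j}_{k^j}$ follows by the same computation applied to \eqref{eq:matPjkjPi}.

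With the identities in hand, \eqref{eq:mathcalPij5} reduces to a set equality between $\mathcal{P}$ and $\bigcap_{a\in\mathcal{N}}H^{-}_{\mathcal{P}}(a)$. One inclusion is immediate: every supporting halfspace contains $\mathcal{P}$ by the definition of $h_{\mathcal{P}}$, hence $\mathcal{P}\subseteq\bigcap_{a\in\mathcal{N}}H^{-}_{\mathcal{P}}(a)$, which is also the inclusion already recorded in \eqref{eq:mathcalPij2}. The reverse inclusion is the substantive part. Here I would invoke the Weyl--Minkowski fact that a bounded polytope equals the intersection of the supporting halfspaces associated with its facet normals (its edge normals, in the planar case). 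Thus it suffices to show that $\mathcal{N}=\mathcal{N}_{-}^{i}\cup\mathcal{N}^{j}$ contains every edge normal of $\mathcal{P}$: once this is established, intersecting over the possibly larger index set $\mathcal{N}$ can only shrink the set below its facet representation, yielding $\bigcap_{a\in\mathcal{N}}H^{-}_{\mathcal{P}}(a)\subseteq\mathcal{P}$ and hence equality.

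The main obstacle is therefore the claim that the edge normals of the Minkowski difference $\mathcal{P}=\mathcal{P}^j+(-\mathcal{P}^i)$ are contained in $\mathcal{N}$. I expect to handle this through the standard description of planar Minkowski sums: for a direction $a\in\mathbb{S}^{1}$ the exposed face of $\mathcal{P}$ equals the sum of the exposed faces of $\mathcal{P}^j$ and $-\mathcal{P}^i$, so $a$ can be an edge normal of $\mathcal{P}$ only if it is an edge normal of $\mathcal{P}^j$ or of $-\mathcal{P}^i$. The edge normals of $\mathcal{P}^j$ are precisely $\{A^{j}_{k^j}\}=\mathcal{N}^{j}$, and negating a polygon negates its normals, so the edge normals of $-\mathcal{P}^i$ are $\{-A^{i}_{k^i}\}=\mathcal{N}_{-}^{i}$; together these give $\mathcal{N}$. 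An equivalent and perhaps cleaner route is the normal-fan argument, in which the normal fan of $\mathcal{P}$ is the common refinement of those of $\mathcal{P}^j$ and $-\mathcal{P}^i$, making the facet normals of $\mathcal{P}$ a subset of the union of the two facet-normal sets. Either way, combining this containment with the two inclusions of the previous paragraph closes the proof of \eqref{eq:mathcalPij5}.
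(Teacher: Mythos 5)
Your proposal is correct and follows essentially the same route as the paper's own proof: the halfspace identities \eqref{eq:mathcalPkijwu} are obtained from the additivity of support functions under Minkowski sums together with the non-redundancy assumption giving $h_{\mathcal{P}^i}(A^{i}_{k^i})=b^{i}_{k^i}$, and the equality \eqref{eq:mathcalPij5} follows from the classical fact that a polytope is the intersection of its supporting halfspaces at facet normals, combined with the observation that the edge normals of a planar Minkowski sum/difference are drawn from those of the two operands. The only (harmless) difference is presentational: where the paper cites the ``edges of a polygonal sum are translated edges of the operands'' result, you justify the same fact via exposed faces or the normal-fan refinement, and you correctly note that containment of the edge-normal set in $\mathcal{N}(x)$ already suffices.
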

    
    \begin{proof}
    The proof is reported in Appendix \ref{app:proof-lem2} to avoid breaking the flow of the exposition. 
    \end{proof}

    \begin{figure}[!htp]
       \centering
       \vspace{-0.4cm}
       \includegraphics[width=0.7\linewidth]{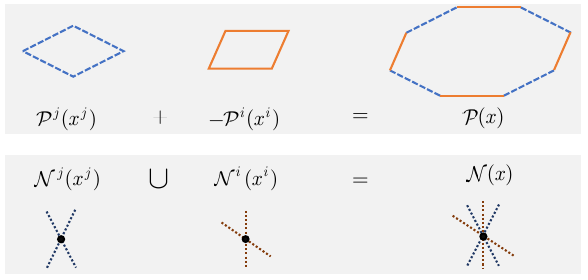}
       \caption{The illustration of the relation between  polygons $\mathcal{P}^j(x^j), -\mathcal{P}^i(x^i), \mathcal{P}(x)$, where $\mathcal{N}^{j}(x^j), \mathcal{N}_{-}^{i}(x^i),\mathcal{N}(x)$ are  their  outer normal vectors. Viewed at the figure, it holds  that  
      $\mathcal{N}(x)= \mathcal{N}^{j}(x^j)\cup \mathcal{N}_{-}^{i}(x^i) $. }
       \label{fig:minkowski_di}
         \vspace{-0.2cm}
    \end{figure}

    \begin{remark}
    It is worth mentioning that the definitions and propositions from \eqref{eq:support-halfspace} to \eqref{eq:mthcalN-ij} are also available for polytopes in $\mathbb{R}^n$, while \eqref{eq:mathcalPij5} holds only for polygons (i.e., polytopes in $\mathbb{R}^2$). 
    This is because the set of all normal vectors of the Minkowski difference $\mathcal{P}(x)=\mathcal{P}^j(x^j)-\mathcal{P}^i(x^i)$ can be given by the union $\mathcal{N}(x)= \mathcal{N}^{j}(x^j)\cup \mathcal{N}_{-}^{i}(x^i) $ when $\mathcal{P}^j(x^j), \mathcal{P}^i(x^i)\subset \mathbb{R}^{2}$, as illustrated in Fig \ref{fig:minkowski_di}. For the case of full-dimensional polytopes $\mathcal{P}^j, \mathcal{P}^i$ in $\mathbb{R}^{n}, n\geq 3$, $\mathcal{N}(x)$ cannot enumerate all normal vectors of the difference $\mathcal{P}(x)$, which has been pointed out by existing references, such as \cite{cameron1986determining, teissandier2011algorithm}. {In 3D case ($n=3$),  enumerating all normal vectors of the difference $\mathcal{P}(x)$ is more complex, however, it is still promising to enumerate all normal vectors by drawing inspiration from the primary  document \cite{cameron1986determining}, the detailed process is left to be studied in the future.} 
     $\hfill\square$ 
    \end{remark}

    Based on {Lemma} \ref{lem:support-halfspace} and {Lemma} \ref{lem:partialK1K2}, {the first main theorem} can be  introduced, which reveals the deeper connection between $h_s(x)$ and $h_a(x)$ in polygon cases.

    \begin{theorem} \label{the:the-signed-distance-is}
    Whenever $\mathcal{P}^i, \mathcal{P}^j$ are polygons, one has: 
    
     1) The inequality in \eqref{eq:mathcalPij4} can be more specified:
    \begin{align} 
    h_{s}(x) \left\{\begin{array}{ll}
    \geq {h}_{a}(x),& ~\text{ if } x \in {\mathcal{S}},  \\ 
    =    {h}_{a}(x),& ~\text{ if } x \in \overline{{\mathcal{S}}^{c}}=
    \partial \mathcal{S}\cup {\mathcal{S}}^{c}. 
    \end{array}
    \right.
    \label{eq:opertsd2}
    \end{align}
   
    2) As a consequence, the following propositions are true: 
    \begin{subequations}\label{eq:underlinesdij}
    \begin{align} 
      {h}_{a}(x)=0  &\Longleftrightarrow x \in  \partial \mathcal{S}, \\
      {h}_{a}(x)>0  &\Longleftrightarrow x \in  \operatorname{Int}(\mathcal{S}),  \\
      {h}_{a}(x)<0  &\Longleftrightarrow x \in  {\mathcal{S}}^{c}. 
    \end{align}
    \end{subequations} 
    \end{theorem}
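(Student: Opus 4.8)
The plan is to reduce everything to the support function of $\mathcal{P}(x)$ and then to read off both the magnitude and the sign of $h_a$ from the halfspace description supplied by Lemma \ref{lem:support-halfspace}. First I would record the exact value of $h_a$. By Lemma \ref{lem:support-halfspace} each $\mathcal{Q}_k(x)$ is a support halfspace $H^{-}_{\mathcal{P}(x)}(a_k)$ with $a_k \in \mathcal{N}(x)$, and the signed distance of a support halfspace was already computed as $\operatorname{sd}(0, H^{-}_{\mathcal{P}}(a)) = -h_{\mathcal{P}}(a)$. Substituting into the definition of $h_a$ gives the closed form
\[
h_a(x) = \max_{a \in \mathcal{N}(x)} \operatorname{sd}\bigl(0, H^{-}_{\mathcal{P}(x)}(a)\bigr) = -\min_{a \in \mathcal{N}(x)} h_{\mathcal{P}(x)}(a),
\]
whereas $h_s(x) = \max_{\|a\|=1}\bigl(-h_{\mathcal{P}(x)}(a)\bigr)$ optimizes over all unit directions. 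Thus $h_a$ is the same maximization restricted to the finite normal set $\mathcal{N}(x)$, which re-proves $h_a \le h_s$ (Proposition \ref{lem:hsxhax}) and isolates where equality can be lost: only when the optimal $a$ for $h_s$ is not a face normal, which happens precisely when the nearest point of $\mathcal{P}$ to the origin is a vertex, i.e. when the origin lies strictly outside $\mathcal{P}$.

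For the equality branch of \eqref{eq:opertsd2}, i.e. for $x \in \overline{\mathcal{S}^c}$ where $0 \in \mathcal{P}(x)$, I would prove the penetration identity $d(0, \partial\mathcal{P}) = \min_{a \in \mathcal{N}} h_{\mathcal{P}}(a)$; together with \eqref{eq:min-dist-primal2} this yields $h_s = -d(0,\partial\mathcal{P}) = -\min_a h_{\mathcal{P}}(a) = h_a$. The lower bound $d(0,\partial\mathcal{P}) \ge \min_a h_{\mathcal{P}}(a)$ is immediate: by Lemma \ref{lem:partialK1K2} every boundary point of $\mathcal{P} = \bigcap_k \mathcal{Q}_k$ lies on some face line $\{a_k^{\top}\omega = h_{\mathcal{P}}(a_k)\}$, hence is at Euclidean distance $\ge h_{\mathcal{P}}(a_k) \ge \min_a h_{\mathcal{P}}(a)$ from the origin. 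For the reverse bound I would take the minimizing normal $a^*$ and show its perpendicular foot $h_{\mathcal{P}}(a^*)\,a^*$ actually lies in $\mathcal{P}$, hence on $\partial\mathcal{P}$: checking each constraint $a_l^{\top}\bigl(h_{\mathcal{P}}(a^*)a^*\bigr) \le h_{\mathcal{P}}(a_l)$ splits into the case $a_l^{\top} a^* \ge 0$, handled by $h_{\mathcal{P}}(a^*) \le h_{\mathcal{P}}(a_l)$, and the case $a_l^{\top} a^* < 0$, handled by $h_{\mathcal{P}}(a_l) \ge 0$ (valid since $0 \in \mathcal{P}$). The $\ge$ branch of \eqref{eq:opertsd2} on $\mathcal{S}$ is then just Proposition \ref{lem:hsxhax}.

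For the consequences \eqref{eq:underlinesdij} I would argue the signs directly from $\mathcal{P} = \bigcap_k \mathcal{Q}_k$, since each $\mathcal{Q}_k$ is a halfspace whose signed distance to the origin is negative, zero, or positive exactly as the origin lies in its interior, on its face, or outside it. If $x \in \mathcal{S}^c$ then $0 \in \operatorname{Int}(\mathcal{P}) \subseteq \operatorname{Int}(\mathcal{Q}_k)$ for every $k$, so $\operatorname{sd}(0,\mathcal{Q}_k) < 0$ for all $k$ and hence $h_a < 0$; if $x \in \operatorname{Int}(\mathcal{S})$ then $0 \notin \mathcal{P}$, so $0 \notin \mathcal{Q}_{k_0}$ for some $k_0$, giving $\operatorname{sd}(0,\mathcal{Q}_{k_0}) > 0$ and $h_a > 0$; and if $x \in \partial\mathcal{S}$ then $0 \in \partial\mathcal{P}$, so every $\operatorname{sd}(0,\mathcal{Q}_k) \le 0$ while the active face contributes the value $0$, forcing $h_a = 0$. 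Because these three cases partition the state space and the three sign conditions are mutually exclusive, each implication upgrades to an equivalence, which is exactly \eqref{eq:underlinesdij}.

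I expect the crux to be the reverse inequality in the penetration identity of the second paragraph, namely showing that the perpendicular foot onto the nearest face line genuinely lands inside the polygon rather than beyond an edge endpoint. This is the step that fails for a point strictly outside $\mathcal{P}$ (where the nearest boundary point may be a vertex, so that $h_a < h_s$ strictly) and the step that relies essentially on convexity and on the exact identity $\mathcal{P} = \bigcap_k \mathcal{Q}_k$ from Lemma \ref{lem:support-halfspace}, which is itself planar-specific; the boundary decomposition needed to make the lower bound rigorous is presumably the content of Lemma \ref{lem:partialK1K2}.
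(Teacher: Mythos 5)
Your proposal is correct, and it establishes the crucial equality branch by a genuinely different argument than the paper. The paper stays entirely at the level of sets and unsigned distances: using Lemma \ref{lem:partialK1K2} it decomposes $\partial\mathcal{P}=\bigcup_k\bigl(\partial\mathcal{Q}_k\cap\widetilde{\mathcal{Q}}_k\bigr)$ with $\widetilde{\mathcal{Q}}_k=\bigcap_{m\neq k}\mathcal{Q}_m$, splits each $\partial\mathcal{Q}_k$ into the piece lying on $\partial\mathcal{P}$ and the piece outside $\mathcal{P}$, and argues that when $0\in\mathcal{P}$ the outside pieces are farther from the origin than $\partial\mathcal{P}$ is; hence $\min_k d(0,\partial\mathcal{Q}_k)=d(0,\partial\mathcal{P})$, which is then converted to signed distances. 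You instead rewrite $h_a=-\min_{a\in\mathcal{N}}h_{\mathcal{P}}(a)$ and prove the penetration identity $d(0,\partial\mathcal{P})=\min_{a\in\mathcal{N}}h_{\mathcal{P}}(a)$ by a two-sided bound whose nontrivial half is an explicit witness: the perpendicular foot $h_{\mathcal{P}}(a^*)a^*$, shown to lie in $\mathcal{P}$ by the case split on the sign of $a_l^{\top}a^*$, both cases exploiting $h_{\mathcal{P}}(\cdot)\geq 0$, i.e.\ exactly the hypothesis $0\in\mathcal{P}$. Both routes rest on the same two planar-specific inputs --- the exact $\mathcal{H}$-representation $\mathcal{P}=\bigcap_k\mathcal{Q}_k$ of Lemma \ref{lem:support-halfspace} and the boundary inclusion $\partial\mathcal{P}\subseteq\bigcup_k\partial\mathcal{Q}_k$ furnished by Lemma \ref{lem:partialK1K2} --- but yours is more constructive: it produces the nearest boundary point in closed form and makes transparent precisely where $0\in\mathcal{P}$ enters, at the price of support-function algebra that the paper's purely metric argument avoids. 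Your treatment of part 2 (three implications over a partition of the state space, upgraded to equivalences by mutual exclusivity of the sign conditions) is also sound and more direct than the paper's, which routes through the set equivalences \eqref{eq:xmathSijc} and derives the strict-inequality cases separately. One cosmetic caveat: your side remark that equality can fail ``precisely when'' the origin lies strictly outside $\mathcal{P}$ is too strong --- strict exteriority is necessary but not sufficient for $h_a<h_s$ (the nearest point may still lie on a face) --- but nothing in your proof depends on that remark.
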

\begin{proof}
 To avoid breaking the flow of the exposition,  the proof is reported in Appendix \ref{app:proof-them2}, where the required lemma (about distance functions) is given in Appendix \ref{app:df}.
\end{proof}

\begin{remark}
The conclusion \eqref{eq:opertsd2} is not surprising since it has been mentioned in \cite[Sec. V]{cameron1986determining}, where it is explained geometrically.  As far as authors know, it is the first time to give formal proof of the important but non-trivial properties in Theorem \ref{the:the-signed-distance-is}. The fact that $h_a, h_s$ have the same zero-superlevel set, i.e., $\mathcal{S}_{a}=\mathcal{S}$, illustrates that $h_a$ can be regarded as a  non-conservative substitution of $h_s$. It is worth mentioning that the authors do not find Lemma  \ref{lem:support-halfspace} and \ref{lem:partialK1K2} in existing related references, whose proofs are provided in this paper. $\hfill\square$  
\end{remark}
    
The following attempts to figure out whether $h_a$ can be chosen as a barrier function candidate on $\mathcal{S}$. To figure out the differentiability of $h_{a}$, the more detailed expression of $h_{a}$ is shown. 
    
First, recall a well-known property of the Linear Program (LP). From \cite[Sec 2.5, Theorem and Corollary 2]{luenberger1984linear} that the minimum value of a LP can be obtained at the vertexes (extreme points) of the polytope (feasible domain), one knows that LPs in \eqref{eq:matPjPiki} and \eqref{eq:matPjkjPi} can be reduced to 
\begin{align}
  \max_{{\omega}^{j} \in \mathcal{P}^{j}}  (-A^{i}_{k^i})^{\top} {\omega}^{j} &=- \min_{ {l}^{j} \in [r^{j}]} ~  (A^{i}_{k^i})^{\top} v^{j}_{l^j}, \label{eq:minoemgaj}\\
  \max_{{\omega}^{i} \in \mathcal{P}^{i}}  (-A^{j}_{k^j})^{\top} {\omega}^{i} &=- \min_{ {l}^{i} \in [r^{i}]} ~  (A^{j}_{k^j})^{\top} v^{i}_{l^i}, \label{eq:minoemgai}
\end{align}  
where $\{v^{i}_{l^i}: l^i \in  [r^{i}]\}, \{v^{j}_{l^j}: l^j \in  [r^{j}]\}$ are vertex sets of $\mathcal{P}^i, \mathcal{P}^j$ introduced in \eqref{eq:polytope-geometry-2}. Substituting \eqref{eq:minoemgaj}, \eqref{eq:minoemgai}  into \eqref{eq:matPjPiki}, \eqref{eq:matPjkjPi} respectively, and recalling the SDF formula of a point to halfspace in \eqref{eq:opersdvH}, one can get 
\begin{subequations}\label{eq:sd-0-PiPj23}
\begin{align}
    \underbrace{\operatorname{sd}({0},  \mathcal{P}^j - \mathcal{P}^{i}_{k^i})}_{=: \phi^{k^i}(x) }  =&
    \min_{l^j \in [r^{j}]} ~  \underbrace{(A^{i}_{k^i})^{\top}v^{j}_{l^j}  - b^{i}_{k^i}}_{=: \phi^{k^i}_{l^j}(x)},
    \label{eq:sd-0-PjPi2} \\
    \underbrace{\operatorname{sd}({0},  \mathcal{P}^{j}_{k^j} - \mathcal{P}^i )}_{=: \psi^{k^j}(x) }  =&  
    \min_{l^i \in [r^{i}]} ~   \underbrace{(A^{j}_{k^j})^{\top}v^{i}_{l^i}  - b^{j}_{k^j}}_{=: \psi^{k^j}_{l^i}(x)} , 
    \label{eq:sd-0-PiPj3} 
\end{align}
\end{subequations} 
then the explicit expression of $h_{a}$ in \eqref{eq:mathcalPij4} can be given as 
\begin{subequations} \label{eq:underopsd3}
\begin{align} 
    {h}_{a}(x)&= \max \left\{{\phi}(x),  {\psi}(x)\right\},   \\
    {\phi}(x) &:= \max_{k^i \in [r^i]} 
    \phi^{k^i}(x), ~~ \phi^{k^i}(x)= \min_{l^j \in [{r^{j}}]}  
    \phi^{k^i}_{l^j}(x), \\ 
    {\psi}(x) &:= \max_{k^j \in [r^j]} 
    \psi^{k^j}(x), ~ \psi^{k^j}(x)=\min_{l^i \in [{r^{i}}]}  
    \psi^{k^j}_{l^i}(x).
\end{align}
\end{subequations} 
This means that ${h}_{a}(x)$ can be expressed as a three-layer (or two-layer) Boolean logic composition  of atomic component functions $ \phi^{k^i}_{l^j}(\cdot)$, $\psi^{k^j}_{k^i}(\cdot)$,  which is inductively defined with  the following logic syntax \cite{glotfelter2020nonsmooth}:
$$B_{1} \wedge B_{2} := \min\{B_{1},B_{2}\}, B_{1} \vee   B_{2} := \max\{B_{1},B_{2}\}.$$    
Based on the expression given in \eqref{eq:underopsd3}, one can further prove the second main theorem about  that $h_a(x)$ is a NBF candidate on the closed set $\mathcal{S}$ as follows.

\begin{theorem} \label{lem:the-function-sd-candidate-NCBF}
From the Boolean logic composition ${h}_{a}(x)$ in \eqref{eq:underopsd3} and its component functions, one can know that  
     
1) ${h}_{a}(x)$ is smooth composed, i.e., for all ${k^i},{l^i} \in [r^i]$ and ${k^j}, {l^j} \in [r^{j}] $, functions $\phi^{k^i}_{l^j}, \psi^{k^j}_{l^i}$ are smooth. Additionally, their gradients $\nabla \phi^{k^i}_{l^j}$, $\nabla \psi^{k^j}_{l^i}$  never vanish.
    
2) ${h}_{a}(x)$ is a NBF candidate on the closed set $\mathcal{S}$. 
\end{theorem}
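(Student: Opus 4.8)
The plan is to treat the two claims in turn, using part 1) as the technical engine that powers part 2).

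For part 1), smoothness is immediate: each atomic function $\phi^{k^i}_{l^j}(x)=(A^{i}_{k^i}(x^i))^{\top}v^{j}_{l^j}(x^j)-b^{i}_{k^i}(x^i)$ is assembled from the maps $A^{i}_{k^i}(\cdot)$, $b^{i}_{k^i}(\cdot)$ and $v^{j}_{l^j}(\cdot)$, all of which are continuously differentiable by conditions 1) and 3) of Assumption \ref{eq:polytope-def}; since sums and products of $C^1$ maps are $C^1$, so are $\phi^{k^i}_{l^j}$ and, symmetrically, $\psi^{k^j}_{l^i}$ (this is the sense of ``smooth'' relevant to the $C^1$ hypotheses). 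For the nonvanishing of gradients I would differentiate only with respect to the partner state: a direct computation gives $\nabla_{x^j}\phi^{k^i}_{l^j}=(\partial v^{j}_{l^j})^{\top}A^{i}_{k^i}$. By condition 2), $A^{i}_{k^i}\in\mathbb{S}^1$ so $A^{i}_{k^i}\neq 0$, and by condition 3) the Jacobian $\partial v^{j}_{l^j}$ is row full rank, whence $(\partial v^{j}_{l^j})^{\top}$ is injective and carries the nonzero vector $A^{i}_{k^i}$ to a nonzero vector. Thus $\nabla\phi^{k^i}_{l^j}\neq 0$ everywhere, and the symmetric identity $\nabla_{x^i}\psi^{k^j}_{l^i}=(\partial v^{i}_{l^i})^{\top}A^{j}_{k^j}\neq 0$ closes part 1). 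This is precisely where conditions 2) and 3) of Assumption \ref{eq:polytope-def}, newly introduced for this development, are consumed.

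For part 2), I would verify each requirement of Definition \ref{def:NBFcandidate}. The three sign conditions, namely $h_a=0$ on $\partial\mathcal{S}$, $h_a>0$ on $\operatorname{Int}(\mathcal{S})$, and $h_a<0$ on $\mathcal{S}^c$, are exactly \eqref{eq:underlinesdij} and are already supplied by Theorem \ref{the:the-signed-distance-is}. Local Lipschitz continuity of $h_a$ follows because, by \eqref{eq:underopsd3}, $h_a$ is a finite $\max$/$\min$ composition of the $C^1$ atoms from part 1), and finite pointwise maxima and minima of locally Lipschitz functions are again locally Lipschitz. Closedness of $\mathcal{S}$ follows from continuity of $h_s$, and nonemptiness from the existence of widely separated configurations, where $h_s>0$.

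The remaining, and main, obstacle is showing that $\mathcal{S}$ has no isolated point, equivalently that $\mathcal{S}=\overline{\operatorname{Int}(\mathcal{S})}$. The only delicate case is a contact configuration $x_0\in\partial\mathcal{S}$, for which I must exhibit nearby configurations lying in $\mathcal{S}$. I would work along the active branch of the composition: at $x_0$ the value $0$ is attained by some $\phi^{k^i_*}$, whose active inner minimizers correspond geometrically to the vertices of $\mathcal{P}^j$ realizing the edge-contact with $\mathcal{P}^i$. Because the associated atomic gradients are nonzero by part 1), and because two touching convex polygons can always be separated by an infinitesimal motion along the contact normal $A^{i}_{k^i_*}$ using the controllability granted by condition 3), I expect to produce a direction $d$ whose one-sided directional derivative $h_a'(x_0;d)$ is positive, placing $x_0+td\in\operatorname{Int}(\mathcal{S})$ for small $t>0$. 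The technical heart is guaranteeing that a single $d$ raises all simultaneously active minimizers at once; I anticipate this reduces to checking that $0$ does not belong to the convex hull of the active atomic gradients, which the nonvanishing-gradient structure together with the planar edge-contact geometry should ensure.
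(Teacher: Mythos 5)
Your part 1) is essentially the paper's own argument: smoothness of the atoms $\phi^{k^i}_{l^j},\psi^{k^j}_{l^i}$ from conditions 1) and 3) of Assumption \ref{eq:polytope-def}, and non-vanishing of gradients from $\nabla_{x^j}\phi^{k^i}_{l^j}=(\partial v^{j}_{l^j})^{\top}A^{i}_{k^i}$ together with $\|A^{i}_{k^i}\|=1$ and full row rank of $\partial v^{j}_{l^j}$. (You even place the argument on the correct block $\nabla_{x^j}$; the paper's text attributes it to $\nabla_{x^i}\phi^{k^i}_{l^j}$, which is evidently a typo, since that block equals $(\partial A^i_{k^i})^{\top}v^j_{l^j}-\nabla_{x^i}b^i_{k^i}$.) For part 2), the paper does exactly your first two steps and then stops: local Lipschitz continuity of $h_a$ (it cites the piecewise-$C^1$ result of Glotfelter et al., where you use the more elementary fact that finite $\max$/$\min$ compositions of $C^1$ functions are locally Lipschitz --- both are fine), followed by the sign conditions \eqref{eq:underlinesdij} from Theorem \ref{the:the-signed-distance-is} together with $\mathcal{S}=\mathcal{S}_a$, ``verifying Definition \ref{def:NBFcandidate} directly.''

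Where you diverge is your final paragraph. The paper reads the ``nonempty closed set with no isolated point'' clause of Definition \ref{def:NBFcandidate} as a standing hypothesis on the set $\mathcal{C}=\mathcal{S}$, not as a property to be established about $h_a$, so its proof ends without addressing it. You instead treat it as a proof obligation and, correctly sensing it is the only nontrivial point, sketch a separation argument at contact configurations. That sketch is not complete, and as stated it would not go through from the paper's assumptions alone: the condition that $0$ avoids the convex hull of the active atomic gradients does not follow from Assumption \ref{eq:polytope-def} --- full row rank of each $\partial v^{j}_{l^j}$ lets you move each vertex individually in any direction, but not necessarily all active vertices simultaneously with a single $\dot{x}^j$. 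What would close it is realizability of a rigid translation of one polygon along the contact normal $A^{i}_{k^i}$ through the parameterization, which holds in both of the paper's examples (rigid body, formation) but is not implied by the stated assumptions. So, judged against the paper's own reading of its definition, your proof is complete and matches the paper's; if one insists the no-isolated-point property of $\mathcal{S}$ must be verified, that step remains open in your write-up --- and is equally unaddressed in the paper.
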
 
    
\begin{proof}
1) From Assumption \ref{eq:polytope-def} and  definitions in \eqref{eq:sd-0-PiPj23}, functions $ \phi^{k^i}_{l^j}$, $\psi^{k^j}_{l^i}$ are smooth. Applying the chain rule, it yields   
    \begin{subequations}\label{eq:nablaphi}
    \begin{align} 
    \nabla  \phi^{k^i}_{l^j} &=\left[ (\nabla_{x^{i}} \phi^{k^i}_{l^j} )^{\top}, (\nabla_{x^{j}} \phi^{k^i}_{l^j} )^{\top} \right]^{\top}, \\
    \nabla_{x^{i}} \phi^{k^i}_{l^j} &= 
     (\partial A^i_{k^i} )^{\top} 
      v^{j}_{l^j} -  \nabla_{x^{i}}   b^i_{k^i} ,\\
    \nabla_{x^{j}} \phi^{k^i}_{l^j} &= 
      (\partial v^{j}_{l^j})^{\top}  A^i_{k^i} .  
    \label{eq:nablaphi3}
    \end{align}
    \end{subequations}
    where $\partial A^i_{k^i}, \partial v^{j}_{l^j}$ are short for Jacobian matrix 
    $\frac{\partial A^i_{k^i}}{\partial x^i}, \frac{\partial v^{j}_{l^j}}{\partial x^j}$ respectively. 
   From Assumption \ref{eq:polytope-def} again, $\|A^{i}_{k^i}(\cdot)\| = 1$ and $\partial v^{j}_{l^j} (\cdot)$ is of full row rank, then $ \nabla_{x^{i}} \phi^{k^i}_{l^j}$ and $\nabla \phi^{k^i}_{l^j}$  are non-zero vectors.  
    
In a similar way, one can obtain the expression of $\nabla  \psi^{k^j}_{l^i} $: 
    \begin{small}
    \begin{align} 
    \nabla  \psi^{k^j}_{l^i} =[  (\underbrace{ 
    (\partial  v^{i}_{l^i})^{\top}  A^j_{k^j} }_{\nabla_{x^{i}} \psi^{k^j}_{l^i} } )^{\top}, 
     ( \underbrace{  (\partial A^j_{k^j} )^{\top}  v^{i}_{l^i} -  \nabla_{x^{j}}   b^j_{k^j}}_{\nabla_{x^{j}} \psi^{k^j}_{l^i} }
     )^{\top} ]^{\top}, \label{eq:nablapsi}
    \end{align}
    \end{small}
  which also never vanishes since $\nabla_{x^{i}} \psi^{k^j}_{l^i}$ is always non-zero.  
    
2) Since ${h}_{a}(x)$ is smooth composed and due to the existing  $\min/\max$ operators, according to \cite[Proposition IV.9]{glotfelter2020nonsmooth}, ${h}_{a}(x)$ is piecewise $C^1$-continuously differentiable, which means that it is locally Lipschitz continuous.  
Finally, recalling that $\mathcal{S}=\mathcal{S}_{a} $ and the relationships in  \eqref{eq:underlinesdij} in {Theorem} \ref{the:the-signed-distance-is}, ${h}_{a}(x)$ is a barrier function candidate on $\mathcal{S}_{a}$ by verifying Definition \ref{def:NBFcandidate} directly. 
\end{proof}
    
\begin{example}[Continued with Example \ref{exa:rigid_poly}]\label{exa:rigid_poly2} 
Consider the functions $\phi^{k^i}_{l^j}(x),\psi^{k^j}_{l^i}(x)$ in \eqref{eq:sd-0-PiPj23}  where $A^{\mathfrak{i}}_{k^\mathfrak{i}},b^{\mathfrak{i}}_{k^\mathfrak{i}},~\mathfrak{i}\in\{i,j\}$ are defined by \eqref{mathbfHehe}  in Example \ref{exa:rigid_poly}. The expressions of $\nabla  \phi^{k^i}_{l^j},\nabla  \psi^{k^j}_{l^i}$ required in  \eqref{eq:nablaphi}, \eqref{eq:nablapsi} are 
\begin{small} 
$$\begin{aligned}  
    \nabla_{x^{i}} \phi^{k^i}_{l^j}
      &= \left[ -(A^{i}_{k^{i}})^{\top}, \left((v^{j}_{l^{j}}-v^{i}_{k^{i}})^{\top}R(\theta^{i}+\pi) - l_{k^{i}} ^{\top}\right)  \frac{\Delta l_{k^{i}}}{\|\Delta l_{k^{i}}\|}\right]^{\top},  \\
    \nabla_{x^{j}} \phi^{k^i}_{l^j} &= \left[ (A^i_{k^i})^{\top}, (A^i_{k^i})^{\top} R(\theta^i+ {\pi}/{2}) l_{l^j} \right]^{\top} ,  \\
    \nabla_{x^{i}} \psi^{k^j}_{l^i}
      &= \left[ (A^j_{k^j})^{\top}, (A^j_{k^j})^{\top} R(\theta^j+ {\pi}/{2}) l_{l^i} \right]^{\top}, \\
      \nabla_{x^{j}}  \psi^{k^j}_{l^i} &= \left[ -(A^{j}_{k^{j}})^{\top}, \left((v^{i}_{l^{i}}-v^{j}_{k^{j}})^{\top}R(\theta^{j}+\pi) - l_{k^{j}} ^{\top}\right)  \frac{\Delta l_{k^{j}}}{\|\Delta l_{k^{j}}\|}\right]^{\top} .
\end{aligned} $$
\end{small} 
\end{example}

\section{Safety Filter design with a single CBF}\label{sec:safetyfilter}
    
Till now, a Boolean NBF candidate ${h}_{a}(x)$ in \eqref{eq:underopsd3} has been established to approximate the SDF $h_s(x)$. 
The most direct way is to validate that ${h}_{a}(x)$ is further a nonsmooth CBF as in \cite{glotfelter2020nonsmooth}, however, it is challenging, since the nonsmooth analysis would be required and the resulting control laws may be discontinuous. To avoid such complicated mathematical discussions, we attempt to build a smooth approximation of ${h}_{a} (x)$ as a continuously differentiable CBF. Based on this, a safety filter with such a single CBF is established to address the stated problem. 
    
Following in line with the recent work \cite{molnar2023composing}, where a framework is proposed to approximate any nested Boolean composition into a smooth function, the log-sum-exp function is also applied to smooth $h_a$:  
\begin{subequations} \label{eq:underopsd4}
\begin{align} 
  \hat{h}_{a} &:=\frac{1}{\kappa} \ln{  (\hat{\phi}+\hat{\psi} )} 
    - \frac{b}{\kappa},  \\
        \hat{\phi} &:=  \sum_{k^i \in [{r^i}]}  \hat{\phi}^{k^i} , 
    ~~~ \hat{\phi}^{k^i} := \frac{1}{ \mathop{\sum}\limits_{l^j \in [{r^{j}}]} \frac{1}{\hat{\phi}^{k^i}_{l^j}}}, \\
    \hat{\psi}  &:=  \sum_{k^j \in [{r^j}]}  \hat{\psi}^{k^j} , 
      ~~  \hat{\psi}^{k^j} := \frac{1}{\mathop{\sum}\limits_{l^i \in [{r^{i}}]} \frac{1}{\hat{\psi}^{k^j}_{l^i}}},  \\ 
    \hat{\phi}^{k^i}_{l^j} &:=\operatorname{exp}(\kappa \phi^{k^i}_{l^j}),  
    ~~\hat{\psi}^{k^j}_{l^i} :=\operatorname{exp}(\kappa \psi^{k^j}_{l^i}), 
\end{align}
\end{subequations} 
where $\phi^{k^i}_{l^j},\psi^{k^j}_{l^i}$ are component functions of $h_a$ in \eqref{eq:underopsd3}, $\kappa>0$ is a smoothing parameter, and $b\in \mathbb{R}$ is a buffer. Define the zero-superlevel set of $\hat{h}_{a}$ as 
\begin{align*} 
\hat{\mathcal{S}}_{a}:= \hat{\mathcal{S}}_{a}({b,\kappa})=\left\{x: \hat{h}_{a}(x; \kappa,b)\geq 0 \right\}, \label{eq:hatmathcalShat}
\end{align*} 
where $\hat{h}_{a}(x)$ is written as $\hat{h}_{a}(x; b,\kappa)$ sometimes, to highlight its dependence on parameters $b,\kappa$.
     
\begin{lemma} \label{theo:error}
Function $\hat{h}_{a}$ in \eqref{eq:underopsd4} approximates $ {{h}_{a}}$ in \eqref{eq:underopsd3} with the following error bound: 
\begin{equation*}
  {{h}_{a}}(x) - \frac{\ln b_2  + b}{\kappa} \leq   ~\hat{h}_{a}(x; b,\kappa)~ \leq  {{h}_{a}}(x) + \frac{\ln b_1 - b}{\kappa},
  \label{eq:combination_error}
\end{equation*}
where $b_1=r^i+r^j$, $b_2=\max\{r^{i},r^{j}\}$. As a consequence, for fixed $\kappa>0$ and whenever ${b} \geq \ln b_1$, $\hat{h}_{a}(x; {b},\kappa) \leq h_{a}(x)$ holds for all $x$. Furthermore, for any fixed constant $b  \geq \ln b_1$, $\lim_{\kappa \to \infty} \hat{h}_{a}(x; b,\kappa) = {{h}_{a}}(x)$, and $\lim_{\kappa \to \infty}  \hat{\mathcal{S}}_{a}( b,\kappa) = \mathcal{S}_{a}.$
\end{lemma}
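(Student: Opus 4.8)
The plan is to peel off the two layers of the smoothing in \eqref{eq:underopsd4} and track the \emph{one-sided} error each layer contributes, then chain these together. Everything rests on the two elementary log-sum-exp estimates: for any finite family $\{y_m\}_{m\in[N]}$,
\[
\max_m y_m \;\le\; \tfrac{1}{\kappa}\ln\textstyle\sum_m \exp(\kappa y_m) \;\le\; \max_m y_m + \tfrac{\ln N}{\kappa},
\]
together with the min-counterpart obtained by applying this to $\{-y_m\}$. I would prove these first (they are standard consequences of monotonicity of $\ln$ and $N\max_m e^{\kappa y_m}\ge\sum_m e^{\kappa y_m}\ge\max_m e^{\kappa y_m}$).

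First I would treat the inner min-layer. Writing $\hat{\phi}^{k^i}=\big(\sum_{l^j\in[r^j]}\exp(-\kappa\phi^{k^i}_{l^j})\big)^{-1}$, we see that $\tfrac{1}{\kappa}\ln\hat{\phi}^{k^i}=-\tfrac{1}{\kappa}\ln\sum_{l^j}\exp(-\kappa\phi^{k^i}_{l^j})$ is exactly minus a log-sum-exp of $\{-\phi^{k^i}_{l^j}\}_{l^j\in[r^j]}$. Applying the estimate with $N=r^j$ and negating yields the one-sided bound
\[
\phi^{k^i}(x)-\tfrac{\ln r^j}{\kappa}\;\le\;\tfrac{1}{\kappa}\ln\hat{\phi}^{k^i}(x)\;\le\;\phi^{k^i}(x),
\]
and symmetrically $\psi^{k^j}-\tfrac{\ln r^i}{\kappa}\le\tfrac{1}{\kappa}\ln\hat{\psi}^{k^j}\le\psi^{k^j}$; thus the soft-min always underestimates the true min by a controlled gap.

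Second I would handle the outer max-layer. The decisive observation is that $\hat{\phi}+\hat{\psi}$ is a single sum of $r^i+r^j$ positive terms, each of the form $\exp(\kappa\,g_m)$ with $g_m$ ranging over $\{\tfrac{1}{\kappa}\ln\hat{\phi}^{k^i}\}_{k^i\in[r^i]}\cup\{\tfrac{1}{\kappa}\ln\hat{\psi}^{k^j}\}_{k^j\in[r^j]}$. Hence $\tfrac{1}{\kappa}\ln(\hat{\phi}+\hat{\psi})$ is the log-sum-exp of the $g_m$ with $N=b_1=r^i+r^j$, and the estimate bounds it between $\max_m g_m$ and $\max_m g_m+\tfrac{\ln b_1}{\kappa}$. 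Feeding in the inner bounds, I push $\max_m g_m$ up to $h_a=\max\{\phi,\psi\}$ (since each $g_m$ is at most the corresponding $\phi^{k^i}$ or $\psi^{k^j}$) and down to $h_a-\tfrac{\ln b_2}{\kappa}$ (since each is at least its target minus $\tfrac{\ln r^j}{\kappa}$ or $\tfrac{\ln r^i}{\kappa}$, and $\max\{r^i,r^j\}=b_2$). Subtracting $b/\kappa$ delivers the two-sided bound of the Lemma.

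The consequences then follow quickly. When $b\ge\ln b_1$ the upper error $\tfrac{\ln b_1-b}{\kappa}\le0$, giving $\hat{h}_a(x;b,\kappa)\le h_a(x)$ pointwise; the limit $\lim_{\kappa\to\infty}\hat{h}_a=h_a$ is a squeeze, since both $\tfrac{\ln b_2+b}{\kappa}$ and $\tfrac{\ln b_1-b}{\kappa}$ vanish. For the set limit I would argue pointwise: any $x$ with $h_a(x)>0$ enters $\hat{\mathcal{S}}_a$ once $\kappa$ is large enough (from the lower bound), while any $x$ with $h_a(x)<0$ never does (from $\hat{h}_a\le h_a$), so with $\mathcal{S}_a$ the closure of its interior (guaranteed by continuity of $h_a$ and nonempty interior of $\mathcal{S}=\mathcal{S}_a$) the limit is identified with $\mathcal{S}_a$. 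The main technical care is the bookkeeping in the outer layer --- recognizing that $\hat{\phi}$ and $\hat{\psi}$ must be merged into one log-sum-exp over all $b_1=r^i+r^j$ atoms rather than bounded blockwise, which is exactly what produces the asymmetric constants $b_1$ above and $b_2$ below. I expect the set-convergence claim to be the only part needing a genuine (if short) topological argument, since it hinges on the implicit notion of set limit and on the regularity of $\mathcal{S}_a$.
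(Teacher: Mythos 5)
Your proposal is correct, but it takes a different route from the paper in one specific sense: the paper does not prove this lemma at all --- it declares it a ``direct corollary of [Theorem 5, molnar2023composing]'' and omits the argument. What you have written is essentially a self-contained reconstruction of that outsourced proof, and the reconstruction is sound: the two elementary log-sum-exp estimates are right; the inner layer is correctly identified as a soft-min (since $\hat{\phi}^{k^i}=\bigl(\sum_{l^j}\exp(-\kappa\phi^{k^i}_{l^j})\bigr)^{-1}$, giving $\phi^{k^i}-\tfrac{\ln r^j}{\kappa}\le\tfrac{1}{\kappa}\ln\hat{\phi}^{k^i}\le\phi^{k^i}$); and your key bookkeeping observation --- that $\hat{\phi}+\hat{\psi}$ must be read as a \emph{single} log-sum-exp over all $b_1=r^i+r^j$ atoms, which is legitimate because $\max\{\phi,\psi\}$ equals the max over the merged index set --- is exactly what produces $\ln b_1$ above, while the blockwise inner deficits $\ln r^j$ and $\ln r^i$ combine via $\max\{\phi-\tfrac{\ln r^j}{\kappa},\psi-\tfrac{\ln r^i}{\kappa}\}\ge h_a-\tfrac{\ln b_2}{\kappa}$ to give $\ln b_2$ below. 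Chaining and subtracting $b/\kappa$ reproduces the stated two-sided bound, and the three consequences follow as you say. What your approach buys over the paper's citation is transparency: it shows precisely where the asymmetric constants $b_1$, $b_2$ come from and verifies that the general theorem of the cited work really does cover this particular reciprocal-sum/nested structure. One minor imprecision to fix: in the set-limit step, your parenthetical claim that $\mathcal{S}_a$ being the closure of its interior is ``guaranteed by continuity of $h_a$ and nonempty interior'' is not a valid inference in general (a zero-superlevel set of a continuous function can have boundary pieces not adherent to the interior); here the regularity instead follows from the geometry --- at any touching configuration ($h_s=0$) the polygons can be separated by an arbitrarily small translation, which Assumption 1 makes realizable through the state, so every boundary point of $\mathcal{S}_a$ is a limit of interior points. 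With that repair, your limsup/liminf argument identifies the limit set as $\mathcal{S}_a$.
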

    
\begin{proof}
The proof is omitted here for brevity since it is the direct corollary of \cite[Theorem 5]{molnar2023composing}. 
\end{proof}
       
In Lemma \ref{theo:error}, $b\geq \ln b_1$ can guarantee that $\hat{\mathcal{S}}_{a}( b,\kappa)$ is a subset of ${\mathcal{S}}_{a}$. And $\kappa$ should be  sufficiently large to make $\hat{\mathcal{S}}_{a}( b,\kappa)$ be nonempty and close enough to ${\mathcal{S}}_{a}$. As shown in {Theorem} \ref{the:the-signed-distance-is} that  ${\mathcal{S}}_{a}= \mathcal{S}$, it is promising to choose $\hat{h}_{a}(x)$ as a CBF to guarantee the safety w.r.t. $\mathcal{S}$. 
    
To verify this,  applying the chain derivative rule for $\hat{h}_{a}$, one can obtain the explicit expression of $\nabla  \hat{h}_{a}$: 
\begin{align*} 
\nabla  \hat{h}_{a}  &=\frac{1}{\hat{\phi}+\hat{\psi}}  \left(\sum_{k^i\in [r^i]} \nabla \tilde{\phi}^{k^i}  + 
\sum_{k^j\in [r^j]} \nabla \tilde{\psi}^{k^j}\right)  , 
\end{align*}
 where $\tilde{\phi}^{k^i} := \frac{ 1}{\kappa} \hat{\phi}^{k^i},  \tilde{\psi}^{k^j} := \frac{ 1}{\kappa} \hat{\psi}^{k^j}$, and 
\begin{align*} 
\nabla \tilde{\phi}^{k^i} = \frac{\nabla \hat{\phi}^{k^i}}{\kappa} = (\hat{\phi}^{k^i})^2  \sum_{l^j \in [{r^{j}}]} 
\frac{ \nabla \phi^{k^i}_{l^j}   }{\hat{\phi}^{k^i}_{l^j}} , \\
\nabla \tilde{\psi}^{k^j} = \frac{\nabla \hat{\psi}^{k^j}}{\kappa}  = (\hat{\psi}^{k^j})^2  \sum_{k^i \in [{r^{i}}]} \frac{\nabla \psi^{k^j}_{k^i} }{\hat{\psi}^{k^j}_{k^i}}, 
\end{align*}
with $\nabla {\phi}^{k^i}, \nabla {\psi}^{k^j}$ given in \eqref{eq:nablaphi}, \eqref{eq:nablapsi} respectively.
   
Motivated by the CBF condition \eqref{eq:minmathcalLF} and the safety filter design \eqref{eq:QP},  for any given locally Lipschitz continuous nominal controller $u_0=[(u_0^{i})^{\top}, (u_0^{j})^{\top}]^{\top}$,  the following distributed\footnote{The control law \eqref{eq:QPdesign} is called \emph{distributed} instead of \emph{centralized}, due to that  $u^i$ and $u^j$  are  designed separately \cite{funada2024collision}. } QP-based control law is proposed for  input $u$ of system \eqref{eq:dotxfxgxu}:
\begin{subequations}\label{eq:QPdesign}
\begin{align}
    u_{*} = & \underset{u \in \mathbb{R}^{2n}}{\operatorname{argmin}}   ~ \| u - u_{0} \|^2, \label{eq:QPdesign1}  \\
     \quad  \text{s.t. } &  (\nabla_{x^{\mathfrak{i}}} \hat{h}_{a}(x) )^{\top} u^{\mathfrak{i}} \geq -\frac{1}{2}  \alpha(\hat{h}_{a}({x})), \forall 
    \mathfrak{i}\in\{i,j\}, \label{eq:QPdesign2} 
\end{align}
\end{subequations}
where $u= [(u^i)^{\top}, (u^j )^{\top}]^{\top}$  and $\alpha(\cdot)$ is any extended class-$\mathcal{K}$ function. Finally, the polygonal collision avoidance problem is proven to be solved.
    
\begin{theorem}\label{the:conclusion}
Consider the  continuously differentiable  BF candidate  $\hat{h}_{a}(x; b,\kappa)$ on the approximated safety set $\hat{\mathcal{S}}_a(b,\kappa)$ with $b\geq \ln b_1$ and sufficiently large $\kappa>0$. If  $\nabla \hat{h}_{a}(x)$ does not vanish on the boundary of $\hat{\mathcal{S}}_{a}$, i.e.,
\begin{align} 
\nabla \hat{h}_{a}(x)\neq 0, ~\forall x \text{ satisfying } \hat{h}_{a}(x)=0, \label{eq:nabhathax}
\end{align} 
then there exists an open neighborhood of $\hat{\mathcal{S}}_{a}$, named as $\hat{\mathcal{D}}_{a}$, such that:
    
  i) $\hat{h}_{a}(x)$ is a CBF on $\hat{\mathcal{D}}_{a}$ for system \eqref{eq:dotxfxgxu}.
    
  ii) Under the safety filter design in \eqref{eq:QPdesign}, the closed set $\hat{\mathcal{S}}_{a}$ is forward invariant for the resulting closed-loop system $\Sigma$. As a result, $\Sigma$ is also safe w.r.t. $\mathcal{S}$ in \eqref{eq:def-safe-set}.  
\end{theorem}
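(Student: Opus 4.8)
The plan is to first carve out the neighborhood $\hat{\mathcal{D}}_{a}$ and establish the (centralized) CBF property of part i) directly from the nonvanishing-gradient hypothesis \eqref{eq:nabhathax}, and then to exploit the fact that the distributed QP \eqref{eq:QPdesign} decouples into two explicit half-space projections in order to produce a locally Lipschitz safe controller whose closed loop keeps the state inside $\hat{\mathcal{S}}_{a}\subseteq\mathcal{S}$, giving part ii). Throughout I use that \eqref{eq:dotxfxgxu} is a single integrator ($f=0$, $g=I$), so $\dot{\hat{h}}_{a}=\nabla\hat{h}_{a}^{\top}u=(\nabla_{x^i}\hat{h}_{a})^{\top}u^i+(\nabla_{x^j}\hat{h}_{a})^{\top}u^j$, and that $\hat{\mathcal{S}}_{a}$ is nonempty for $\kappa$ large by Lemma \ref{theo:error}.

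For part i): since $\hat{h}_{a}$ is continuously differentiable, $\nabla\hat{h}_{a}$ is continuous, and by \eqref{eq:nabhathax} it is nonzero on the boundary $\partial\hat{\mathcal{S}}_{a}=\{\hat{h}_{a}=0\}$; hence there is an open set $\mathcal{U}\supseteq\partial\hat{\mathcal{S}}_{a}$ on which $\nabla\hat{h}_{a}\neq0$. I would take $\hat{\mathcal{D}}_{a}:=\operatorname{Int}(\hat{\mathcal{S}}_{a})\cup\mathcal{U}$, which is open and contains $\hat{\mathcal{S}}_{a}$. The set $K(x)$ in \eqref{eq:minmathcalLF} reads $\{u:\nabla\hat{h}_{a}(x)^{\top}u\geq-\alpha(\hat{h}_{a}(x))\}$. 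On $\operatorname{Int}(\hat{\mathcal{S}}_{a})\cup\partial\hat{\mathcal{S}}_{a}$ the right-hand side is $\leq0$, so $u=0\in K(x)$; on $\mathcal{U}\cap\{\hat{h}_{a}<0\}$ the gradient is nonzero, so a sufficiently large positive multiple of $\nabla\hat{h}_{a}$ lies in $K(x)$. Thus $K(x)\neq\emptyset$ throughout $\hat{\mathcal{D}}_{a}$, and $\hat{h}_{a}$ is a valid CBF there in the sense of Definition \ref{def:cbf}.

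For part ii): the objective in \eqref{eq:QPdesign1} separates as $\|u^i-u_0^i\|^2+\|u^j-u_0^j\|^2$ while each constraint in \eqref{eq:QPdesign2} involves a single block, so the QP splits into two independent single-constraint projections with the closed form
\begin{equation*}
u_*^{\mathfrak{i}} = u_0^{\mathfrak{i}} + \frac{\max\{0,\, -\tfrac{1}{2}\alpha(\hat{h}_{a}) - (\nabla_{x^{\mathfrak{i}}}\hat{h}_{a})^{\top}u_0^{\mathfrak{i}}\}}{\|\nabla_{x^{\mathfrak{i}}}\hat{h}_{a}\|^2}\,\nabla_{x^{\mathfrak{i}}}\hat{h}_{a}, \quad \mathfrak{i}\in\{i,j\}.
\end{equation*}
Provided each block $\nabla_{x^{\mathfrak{i}}}\hat{h}_{a}$ is nonzero on $\hat{\mathcal{D}}_{a}$, this formula is well defined, locally Lipschitz (as $u_0$, $\hat{h}_{a}$, $\nabla\hat{h}_{a}$ and $\alpha$ are, and $\max\{0,\cdot\}$ is Lipschitz), and tightens or satisfies each constraint of \eqref{eq:QPdesign2} by construction. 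Adding the two satisfied inequalities and using the single-integrator identity for $\dot{\hat{h}}_{a}$ yields the centralized condition $\dot{\hat{h}}_{a}(x,u_*)\geq-\alpha(\hat{h}_{a}(x))$, i.e. $u_*\in K(x)$; Lemma \ref{lem:set-invariance-and-stability} then renders $\hat{\mathcal{S}}_{a}$ forward invariant. Finally, Lemma \ref{theo:error} with $b\geq\ln b_1$ gives $\hat{h}_{a}\leq h_{a}$, so $\hat{\mathcal{S}}_{a}\subseteq\mathcal{S}_{a}$, and $\mathcal{S}_{a}=\mathcal{S}$ by Theorem \ref{the:the-signed-distance-is}; hence $x(0)\in\hat{\mathcal{S}}_{a}$ forces $x(t)\in\hat{\mathcal{S}}_{a}\subseteq\mathcal{S}$ for all $t\geq0$, which is exactly the safety requirement \eqref{eq:def-safe-set2} for $\mathcal{S}$ in \eqref{eq:def-safe-set}.

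The hard part will be the nonvanishing of the individual gradient blocks $\nabla_{x^i}\hat{h}_{a}$ and $\nabla_{x^j}\hat{h}_{a}$ on $\hat{\mathcal{D}}_{a}$, which is what both feasibility and Lipschitz continuity of the decoupled controller require. Hypothesis \eqref{eq:nabhathax} only guarantees that the stacked gradient $\nabla\hat{h}_{a}$ is nonzero, which suffices for the centralized property i) but not obviously for ii), since $\nabla_{x^{\mathfrak{i}}}\hat{h}_{a}$ is a positive-weighted combination of the atomic blocks in \eqref{eq:nablaphi} and \eqref{eq:nablapsi} that could in principle cancel. I would close this gap by shrinking $\hat{\mathcal{D}}_{a}$ via continuity so that both blocks remain nonzero, leveraging the fact from Theorem \ref{lem:the-function-sd-candidate-NCBF} that the cross-blocks $\nabla_{x^i}\psi^{k^j}_{l^i}=(\partial v^{i}_{l^i})^{\top}A^{j}_{k^j}$ and $\nabla_{x^j}\phi^{k^i}_{l^j}=(\partial v^{j}_{l^j})^{\top}A^{i}_{k^i}$ never vanish; failing a cancellation-free guarantee in full generality, the blockwise nonvanishing can be recorded as the precise regularity condition defining $\hat{\mathcal{D}}_{a}$.
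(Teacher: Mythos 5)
Your proof follows essentially the same route as the paper's: the same choice of $\hat{\mathcal{D}}_{a}$ (automatic feasibility on $\operatorname{Int}(\hat{\mathcal{S}}_{a})$, plus a collar around $\partial\hat{\mathcal{S}}_{a}$ where the gradient is nonzero), the same summing of the two constraints in \eqref{eq:QPdesign2} to recover the centralized condition in \eqref{eq:minmathcalLF}, the same appeal to Lemma \ref{lem:set-invariance-and-stability} for forward invariance, and the same inclusion chain $\emptyset\neq\hat{\mathcal{S}}_{a}\subseteq\mathcal{S}_{a}=\mathcal{S}$ obtained from Lemma \ref{theo:error} and Theorem \ref{the:the-signed-distance-is}. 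Your part i) is in fact slightly tighter than the paper's: Definition \ref{def:cbf} only asks that the centralized set $K(x)$ be nonempty, which your argument establishes directly from \eqref{eq:nabhathax}, whereas the paper routes part i) through feasibility of the distributed constraints, a stronger intermediate claim.

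The ``hard part'' you flag at the end is a genuine issue, and you should know that the paper's own proof does not resolve it either. Nonvanishing of the stacked gradient $\nabla\hat{h}_{a}$ does not imply nonvanishing of the blocks $\nabla_{x^{i}}\hat{h}_{a}$ and $\nabla_{x^{j}}\hat{h}_{a}$, yet blockwise nonvanishing is exactly what is needed both for feasibility of \eqref{eq:QPdesign2} at points of $\hat{\mathcal{D}}_{a}$ where $\hat{h}_{a}(x)<0$ and for the well-posedness and local Lipschitz continuity of the explicit solution \eqref{eq:ustari} near $\partial\hat{\mathcal{S}}_{a}$, which Lemma \ref{lem:set-invariance-and-stability} requires of the controller. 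The paper passes over this silently in the proof and concedes it only in the remark following the theorem, where the $\varepsilon$-regularized denominator is offered as a practical fix. Your instinct that a pure continuity/shrinking argument cannot close the gap is correct---each block is a positively weighted combination of nonvanishing atomic gradients and these can still cancel---so recording blockwise nonvanishing near $\partial\hat{\mathcal{S}}_{a}$ as part of the standing regularity hypothesis, as you propose, is the honest repair, and it assumes nothing beyond what the paper's argument implicitly uses.
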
  
    
\begin{proof}
  i). First, \eqref{eq:QPdesign} is obviously feasible whenever $\nabla \hat{h}_{a}(x)$  vanishes in $\operatorname{Int}(\hat{\mathcal{S}}_{a})$. From \eqref{eq:nabhathax} and continuity of $\nabla \hat{h}_{a}$, there exists an open set $\hat{\mathcal{D}}_{a}\supset \hat{\mathcal{S}}_{a}$, $\nabla \hat{h}_{a}(x)$ does not vanish on it. Hence,  the inequality of QP in \eqref{eq:QPdesign} is feasible for all $x\in \hat{\mathcal{D}}_{a}$. As a result, for all $x\in \hat{\mathcal{D}}_{a}$, there exists $u=[(u^{i})^{\top},(u^{j})^{\top}]^{\top}$, such that 
    $$\begin{aligned}
    (\nabla \hat{h}_{a}(x))^{\top}u&=(\nabla_{x^i} \hat{h}_{a}({x}))^{\top} u^{i} + (\nabla_{x^j} \hat{h}_{a}({x}))^{\top} u^{j}\\
    &\geq  \left(- \frac{1}{2} -  \frac{1}{2}\right)  \alpha(\hat{h}_{a}({x})) =  -  \alpha(\hat{h}_{a}({x})).
    \end{aligned}$$
  Referring to Definition \ref{def:cbf},  $\hat{h}_{a}(x)$ is a CBF.
    
    ii) Since $\hat{h}_{a}(x)$ is a smooth CBF, directly from Lemma \ref{lem:set-invariance-and-stability}, 
    the forward invariance of $\hat{\mathcal{S}}_{a}$ can be guaranteed, i.e., $x(t)\in \hat{\mathcal{S}}_{a},\forall t$, whenever 
    $x_0\in \hat{\mathcal{S}}_{a}$. From Theorem \ref{the:the-signed-distance-is} and Lemma \ref{theo:error}, one can know the fact that $\emptyset \neq \hat{\mathcal{S}}_{a}\subset {\mathcal{S}}_{a}={\mathcal{S}}$, which indicates that the resulting closed-loop system is safe w.r.t. $\mathcal{S}$.
    \end{proof}

    \begin{remark}
    Similar to \cite{tan2021high,molnar2023composing}, the optimization-defined control law \eqref{eq:QPdesign} has an explicit solution:
    \begin{subequations}\label{eq:ustari}
    \begin{align}
     &u_{*}^{\mathfrak{i}}  = u_0^{\mathfrak{i}} + \max\{0,\eta^{\mathfrak{i}}(x)\}\frac{\nabla_{x^{\mathfrak{i}}} \hat{h}_{a}(x)}{\|\nabla_{x^{\mathfrak{i}}} \hat{h}_{a}(x)\|^2},~ \forall \mathfrak{i}\in\{i,j\},  \\ 
     &\eta^{\mathfrak{i}}(x) =- (u_{0}^{\mathfrak{i}})^{\top} \nabla_{x^{\mathfrak{i}}} \hat{h}_{a}(x) - \frac{1}{2}  \alpha(\hat{h}_{a}({x})).  
    \end{align}
    \end{subequations} 
    It is worth noticing that condition  \eqref{eq:nabhathax} is a standard condition to formally guarantee the continuity of the QP-based control law \cite{ames2016control, tan2021high}. 
    Although it is proved in {Theorem} \ref{lem:the-function-sd-candidate-NCBF} that the gradients $\nabla \phi^{k^i}_{l^j}$, $\nabla \psi^{k^j}_{l^i}$  never vanish, as pointed out in \cite{lindemann2018control}, using the log-sum-exp softening function could make the gradient of $\hat{h}_{a}$ vanish possibly.  
    $\nabla_{x^{\mathfrak{i}}} \hat{h}_{a}$ vanishing on the boundary will cause the control input $u_{*}^{\mathfrak{i}}$ in \eqref{eq:ustari} to be sufficiently large. For the practical application, this potential problem can be avoided by adding a sufficiently small $\varepsilon>0$ in the denominator, i.e., 
    replacing the term  ``$\|\nabla_{x^{\mathfrak{i}}} \hat{h}_{a}(x)\|^2 $'' with ``$\|\nabla_{x^{\mathfrak{i}}} \hat{h}_{a}(x)\|^2+\varepsilon$''.
    $\hfill\square$  
    \end{remark}

    Till now, all theoretical results have been reported, and their relationships can be concisely found in Fig. \ref{fig:structure}.
    \begin{figure}[!htp]
       \centering
       \vspace{-0.4cm}
       \includegraphics[width=0.7\linewidth]{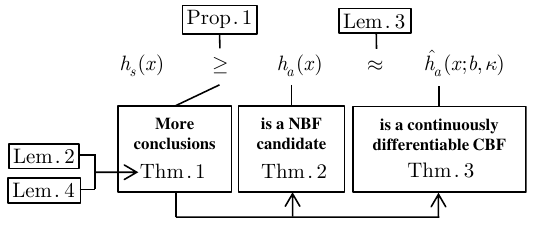}
       \vspace{-0.2cm}
       \caption{The organization of the main theoretical results.}
       \label{fig:structure}
         \vspace{-0.4cm}
    \end{figure}

    In the following, two extensions are discussed. {As pointed out in \cite{lindemann2018control}, CBF proposed for a single integrator can be easily extended to some one-order nonlinear dynamics. For example, 
    the extension of the design \eqref{eq:QPdesign} 
    for the control-affine system $\dot{x}^\mathfrak{i} = f^\mathfrak{i}({x}^\mathfrak{i}) + g^\mathfrak{i}({x}^\mathfrak{i})u^\mathfrak{i} $ is 
    \begin{subequations}\label{eq:QPdesignn} 
    \begin{align}
     u_{*} = & \underset{u \in \mathbb{R}^{2n}} {\operatorname{argmin}} ~ \| u - u_{0} \|^2, \label{eq:QPdesign1n}  \\
      \text{s.t. } & (\nabla_{x^{\mathfrak{i}}} \hat{h}_{a})^{\top}
     ( f^\mathfrak{i} + g^\mathfrak{i} u^\mathfrak{i})\geq -\frac{1}{2}  \alpha(\hat{h}_{a}),  \forall 
    \mathfrak{i}\in\{i,j\},  \label{eq:QPdesign2n}  
    \end{align}
    \end{subequations}
    where  $g^\mathfrak{i} ( g^\mathfrak{i})^{\top}  $ should be positive definite. Such a distributed safety filter is tested for two nonholonomic vehicles in Section \ref{sec:nonholonomic_vehicle}.}  
    On the other hand, the above assumes both $u^i, u^j$ are controlled, while for the case that only $u^i$ is
    controlled and $\dot{x}^{j}$ can be observed, similar safety filters can also be designed, as seen in the underactuated crane example in Section \ref{sec:underactuated_crane}.
  
    \section{Numerical Simulation Examples}\label{sec:numerical-simu}
     In this section, two simulation examples are carried out to show the effectiveness of the above theoretical results. 
    Although the above theoretical results are formally stated for single integrator dynamics, the following two examples show that the proposed method can be generalized to nonlinear dynamics even second-order systems, as in Fig. \ref{Fig:examples}. The video recording of the complete moving process in these simulations can be viewed at  \href{https://youtu.be/KvlB5D3SJtE}{https://youtu.be/KvlB5D3SJtE}.

    \begin{figure}[!htp]
    \vspace{-0.4cm}
       \centering
    \includegraphics[width=0.7\linewidth]{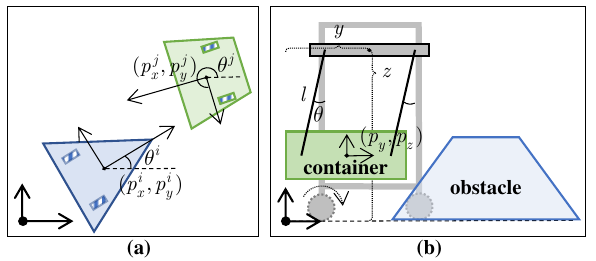}
       \caption{Illustration of the two simulation examples. (a) Coordinated collision avoidance of two polygonal nonholonomic vehicles. (b) Collision avoidance of an underactuated container crane with a polygonal obstacle.}
       \vspace{-0.3cm}
       \label{Fig:examples}
        \vspace{-0.2cm}
    \end{figure}
    
    \subsection{Nonholonomic  vehicle example}\label{sec:nonholonomic_vehicle}
    
    \emph{1) Simulation setup ann primary results}: 
    The efficacy of the proposed approach 
    for coordinated collision avoidance of two polygonal nonholonomic vehicles is demonstrated. Consider the standard nonholonomic kinematic vehicles \cite{dixon2001nonlinear}:
    \begin{equation*}
    \begin{aligned}
    \dot{p}_x^\mathfrak{i} &= v^\mathfrak{i} \cos \theta^\mathfrak{i}, ~
    \dot{p}_y^\mathfrak{i} = v^\mathfrak{i} \sin \theta^\mathfrak{i}, ~
    \dot{\theta}^\mathfrak{i} = \omega^\mathfrak{i}, ~
     \mathfrak{i} \in \{i,j\},
     \end{aligned}
    \end{equation*} 
    with state $x^\mathfrak{i}=[{p}_x^\mathfrak{i},{p}_y^\mathfrak{i},{\theta}^\mathfrak{i}]^{\top}$ and input $ u^\mathfrak{i} =[ v^\mathfrak{i}, \omega^\mathfrak{i}]^{\top}$. 
     
    Consider the situation that vehicles $i,j$  carry a triangular load $\mathcal{P}^{i}(x^{i})$ and a trapezoidal load $\mathcal{P}^{j}(x^{j})$ respectively,  as illustrated in Fig  \ref{Fig:examples}(a). The vertex coordinates of $\mathcal{P}^{i},\mathcal{P}^{j}$ 
    (in their own body frames) are denoted as the columns of  $L^{i},L^{j}$ respectively:
     $$L^{i}=\left[\begin{array}{ccc}  
     3& -2  & -2 \\
     0& -2.5& 2.5
     \end{array}\right], 
      L^{j}=\left[\begin{array}{cccc}  
     1&1&-1&-1 \\
    1.5&-1.5&-1&1 
     \end{array}\right],
     $$ then their $\mathcal{H}, \mathcal{V}$-representations in the inertial frame can be obtained directly from Example \ref{exa:rigid_poly} and the required gradients can be calculated as Example \ref{exa:rigid_poly2}. In this case, approximating 
    polygons as simple circles or ellipses will be too conservative, hence the polygonal collision avoidance problems should be solved directly. 
     
    A situation where potential collisions happen caused by nominal input is introduced to 
    illustrate the role of the safety filter. In detail, the trajectory tracking controller presented in \cite{olfati2002near},  is applied here: 
    $$k^\mathfrak{i}_d = L R(\theta^\mathfrak{i}) (-K^\mathfrak{i} ({p}^\mathfrak{i} - {p}_d^\mathfrak{i}) + \dot{p}_d^\mathfrak{i} ), ~\forall \mathfrak{i}\in\{i,j\},$$
    where $L=[1,0;0,1/l]$ with $l=0.1$, and ${p}^\mathfrak{i}= [{p}_x^\mathfrak{i},{p}_y^\mathfrak{i}]^{\top}$. 
    For simplicity, control gains $K^i, K^j$ are chosen as $I_2$,  while the desired trajectories of vehicles $i,j$ are set  respectively as 
    $$\begin{aligned}
     {p}_d^i &=[18\sin(-0.8t-0.5\pi), 12\cos(-0.8t-0.5\pi)]^{\top}, \\
     {p}_d^j &=[14\sin(~~0.9t+0.5\pi), ~~9\cos(~~0.9t+0.5\pi)]^{\top}.
    \end{aligned}$$
    Accordingly, the initial pose of vehicles $i,j$ are set to 
    $x^{i}(0)=[-18,0, -0.5\pi]^{\top}$ and $
     x^{j}(0)=[ 14,0,  0.5\pi]^{\top}$ (with units $\mathrm{m},\mathrm{m}, \mathrm{rad}$).
    
    \begin{figure}[!htp]
    \vspace{-0.2cm}
       \centering
    \includegraphics[width=0.7\linewidth]{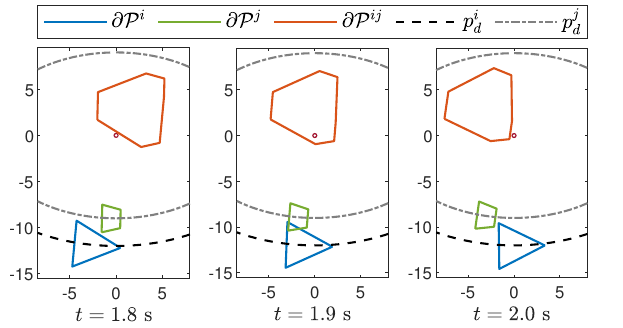}
       \caption{Illustration of the moving process of vehicles $i,j$ using the nominal control law (without the safety filter). By observing that  $\mathcal{P}^{i}$ (blue region)  intersects $\mathcal{P}^{j}$  (green region), as well as the origin (red dot) is inside their Minkowski difference $\mathcal{P}^{ij}=\mathcal{P}^{j}- \mathcal{P}^{i}$ (red region), one can know that the collision happens around $1.9~\mathrm{s}$.}
       \label{fig:Sim_s0}
      \vspace{-0.6cm}
    \end{figure}
    
   The simulations are all performed in Matlab/Simulink. 
    Fig. \ref{fig:Sim_s0} shows the moving process of these two vehicles with the nominal inputs, where the collision happens around $1.9~\mathrm{s}$. 
    {Fig. \ref{Sim_s2}  illustrates the moving process of vehicles with the proposed distributed safety filter \eqref{eq:QPdesignn} using different choices of $\kappa$.} 
    In order to explain this matter, the function 
    $\alpha(\cdot)$ is set to $\alpha(h)=5h,\forall h$ for simplicity, and  
    the gain $b$ is picked up as $b= \ln(3+4)$ according to {Lemma} \ref{theo:error}. The results show that $\kappa=5$ can produce more aggressive collision avoidance behavior than $\kappa=1$, which is consistent with 
    the theoretical result obtained in \emph{Lemma} \ref{theo:error}.  
    
    \begin{figure}
      \centering
      \subfloat[$\kappa=5$, producing aggressive avoidance behavior.]{\includegraphics[width=0.34\textwidth]{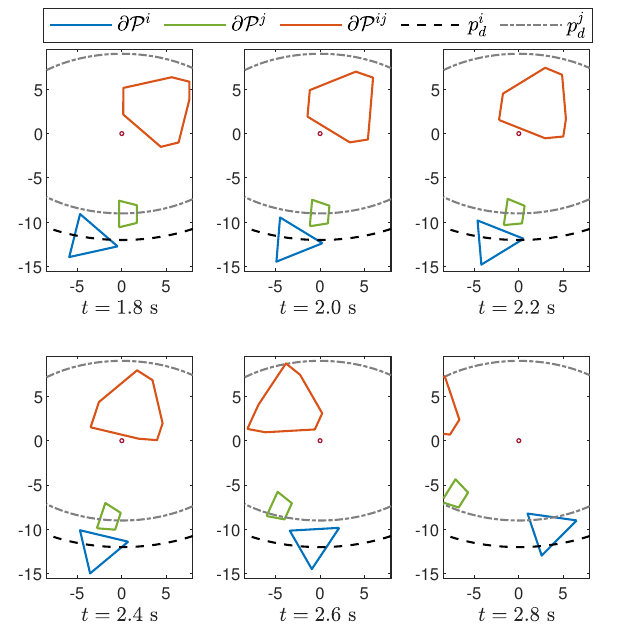}}

      \subfloat[$\kappa=1$, producing conservative avoidance behavior.]
      {\includegraphics[width=0.34\textwidth]{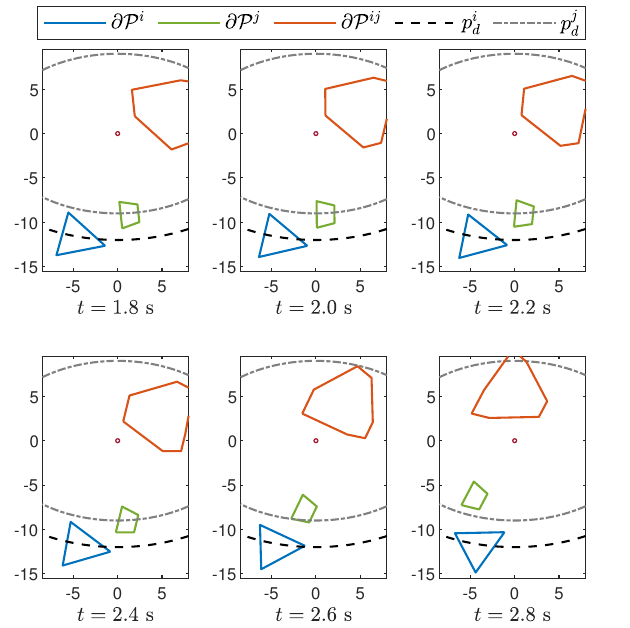}}
    \caption{ Illustration of the moving process of vehicles $i,j$ under the proposed safety filter in  X-Y plane  with different parameters for $\kappa$.  
    } \label{Sim_s2}   
    \end{figure}  
    
    For clarity, the time response of the value of CBF $\hat{h}_{a}(x(t)) $ at the time interval $[0,10]~ (\mathrm{s})$ is shown in Fig. \ref{fig:Sim_s0_comp}. From it, one can observe that
    the red line is below zero sometimes, which means that the collision between vehicles happens when the safety filter (SF) is not applied. By comparing the evolution of the blue and green lines, one can conclude that the SF with $\kappa=1$ will make a faster response to collision
    than the SF with $\kappa=5$. This confirms the conclusion obtained in the last paragraph that the higher $\kappa$ can produce more aggressive collision avoidance behaviors.
     
    \begin{figure}
         \centering
      \includegraphics[width=0.7\linewidth]{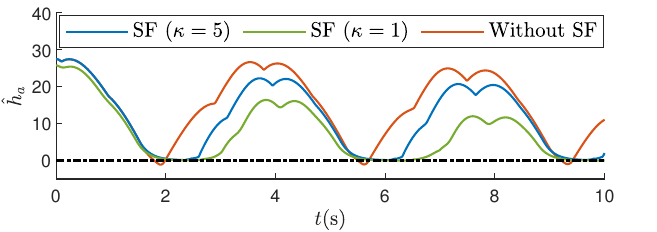}
        \vspace{-0.2cm}
         \caption{The time responses of $\hat{h}_{a}(x(t)) $ at time interval $[0,10]~ (\mathrm{s})$. 
         It shows that the collision happens when the SF is not applied (seen from the red line) and the SF with $\kappa=1$ (the green line)  makes an earlier response to collision
      than the SF with $\kappa=5$ (the blue line).  }
         \label{fig:Sim_s0_comp}
          \vspace{-0.6cm}
      \end{figure}
    
    {\emph{2) Comparison results}: To further illustrate the effectiveness of the proposed method, the comparison with the existing SDF-linearized CBF method \cite{singletary2022safety}  is provided.   In detail, $\kappa$ is chosen as $5$, and other parameters of opti-free CBF are the same as the ones in \emph{Subsection 1)}. Then  the SDF-linearized CBF method (under different sampling densities, i.e., sampling each edge by $10, 15, 20$ points respectively) is chosen as  a method of comparison. 
     
     By the aid of  \cite[Eq. (26)]{singletary2022safety}, $h_s(x)$ can be rewritten as 
    \begin{align}
    h_s(x)
    =& \max_{a^{\top} a=1}~  \min _{ {
    \omega^i \in \mathcal{P}^i(x^i),  
    \omega^j \in \mathcal{P}^j(x^j)}} ~ a^{\top} (\omega^j - \omega^i) \label{eq:hsxmax} \\
      =& ~ \hat{a}(x)^{\top}  (\underbrace{{p}^{j}+{R}(\theta^{j})\hat{g}^{j}(x)}_{=\hat{\omega}^j(x)} - \underbrace{({p}^{i}+{R}(\theta^{i}) \hat{g}^{i}(x))}_{=\hat{\omega}^i(x)}  ).   \nonumber
    \end{align} 
    Here, vectors $\hat{a}$  and $\hat{\omega}^i, \hat{\omega}^j$ denote the direction and points
    that maximize and minimize the expression in \eqref{eq:hsxmax} respectively. These  vectors depend on  $x=\operatorname{col}(x^i,x^j)$ with $x^i=\operatorname{col}(p^i_x, p^i_y,\theta^i)$, $x^j=\operatorname{col}(p^j_x, p^j_y,\theta^j) $.  Referring to \cite[Eq. (27)]{singletary2022safety},  gradients $  \nabla_{x^i}  h_s(x),  \nabla_{x^j}  h_s(x) \in \mathbb{R}^{3\times1}$ can be approximated by the local linearization: 
    \begin{align*}
    \nabla_{x^i}  h_s(x) &\approx  -[ \hat{a}(x)^{\top}, \hat{a}(x)^{\top}R(\theta^{i}+ {\pi}/2 ) \hat{g}^{i}(x) ]^{\top} , \\
    \nabla_{x^j}  h_s(x) &\approx  ~~[ \hat{a}(x)^{\top},  \hat{a}(x)^{\top}R(\theta^{j}+ {\pi}/2 ) \hat{g}^{j}(x) ]^{\top} ,
    \end{align*}
    where  $\hat{a}(x)=\frac{\hat{\omega}^j(x) - \hat{\omega}^i(x)}{\|\hat{\omega}^j(x) - \hat{\omega}^i(x)\| }$,  and 
    $$
    \hat{g}^{i}(x)= {R}(\theta^{i})^{\top} ( \hat{\omega}^i(x) - {p}^{i} ), ~ \hat{g}^{j}(x)= {R}(\theta^{j})^{\top} ( \hat{\omega}^j(x) - {p}^{j} ). 
    $$ 
    It is worth noting that calculating (or approximating) the nearest points $\hat{\omega}^i(x), \hat{\omega}^j(x)$ is a 
    time-consuming task,  regardless of applying the enumeration/ergodic  search, well-known Gilbert–Johnson–Keerthi (GJK) algorithm or other collision detection methods. Here, to balance the difficulties in implementation, the enumeration/ergodic search is applied to approximate $\hat{\omega}^i(x), \hat{\omega}^j(x)$ at different $x$   by sampling the boundaries of two polygons properly:
    $$
    [\hat{\omega}^i(x)^{\top}, \hat{\omega}^j(x)^{\top}]^{\top} \approx  \argmin_{ {\omega}^i \in \partial \tilde{\mathcal{P}}^{i}(x), {\omega}^j \in \partial \tilde{\mathcal{P}}^{j}(x) }  \|  {\omega}^i - {\omega}^j    \|, 
    $$
    where finite point sets $\partial \tilde{\mathcal{P}}^{i}, \partial \tilde{\mathcal{P}}^{j}$  are sampled boundaries  of two polygons. In detail, 
    each edge of these two polygons is uniformly sampled by $10, 15, 20$ points respectively. The  SDF-linearized CBF $h_s(x)$  and $\nabla  h_s(x)$ can be calculated by the aid of the above approximations, to substitute the opti-free CBF $\hat{h}_a(x)$ and $\nabla  \hat{h}_a(x)$. 
    
Then, the SDF-linearized CBF is also simulated in Matlab/Simulink, and the comparison result is reported as follows. On the one hand, the system response under SDF-linearized CBF methods (under different sampling densities) and the proposed one are compared in Fig. \ref{fig:Sim3_comparisons}, where the function $h_a(x)$ is applied to measure whether collisions happen. From the results, when the sampling density is $15$ or $10$, the SDF-linearized CBF may lead to a collision behavior, which is illustrated by the red or green lines. When the sampling density is up to $20$, the SDF-linearized CBF,  as well as the proposed method, can avoid the collision between vehicles, as seen from the black and blue lines. On the other hand, the computing time  of these methods can be obtained by the aid of Simulink/Profiler directly, and they are compared in Tab. \ref{fig:Comparisons_little}. From the results, one can know that  the proposed method is several times  faster than the SDF-linearized CBF, no matter the sampling density of the SDF-linearized CBF is chosen as  $10, 15$ or $20$.  

As a brief summary,  both methods could avoid the collision, but the proposed method is more efficient than the SDF-linearized CBF,  which is a typical example of implicit, optimization-embedded methods in Tab. \ref{tab:comparisons}.

\begin{figure}[!htp]
      \vspace{-0.2cm}
         \centering
      \includegraphics[width=0.8\linewidth]{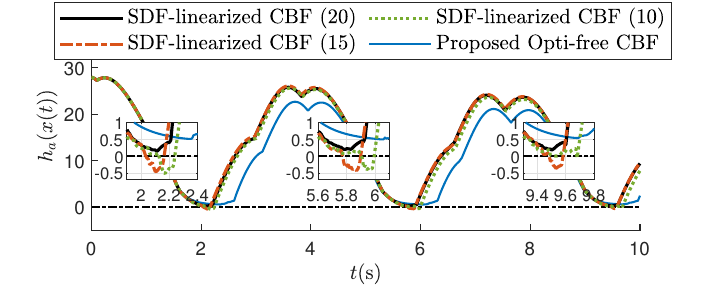}
      \vspace{-0.2cm}
         \caption{{Comparisons of the responses of $h_{a}(x(t))$, where function $h_a(x)$ is applied to measure whether collisions happen.  when the sampling density is $15$ or $10$, the SDF-linearized CBF may lead to a collision behavior, which is illustrated by the red or green lines. When the sampling density is up to $20$, 
         the SDF-linearized CBF,  as well as the proposed method, can avoid the collision between vehicles, as seen from the black and blue lines.}}
         \label{fig:Sim3_comparisons}
       \vspace{-0.3cm}
\end{figure}
      
\begin{table}[]
         \centering
      \includegraphics[width=0.8\linewidth]{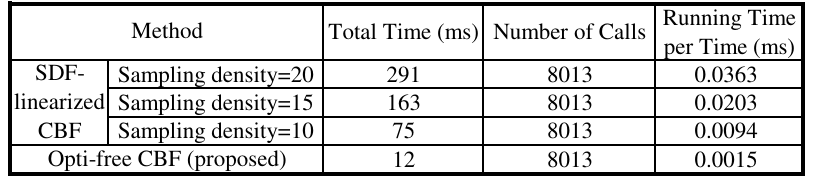}
    \vspace{-0.2cm}
       \caption{Comparisons of the computing time. Running Time per Time is calculated by dividing the Total Time (TT) by the Number of Calls (NoC), where  TT and NoC are directly collected from Simulink/Profiler directly. From the above, one can observe that the proposed Opti-free CBF method is more efficient than the SDF-linearized CBF method (under different sampling densities). }  
         \label{fig:Comparisons_little}
\end{table}
    
\subsection{Underactuated crane example}\label{sec:underactuated_crane}
This subsection shows that the proposed smooth approximated signed distance function $\hat{h}_{a}$ could be extended to avoid the collision between a container crane and a moving polygonal obstacle.
      
\emph{1) {Dynamics} and geometry setup}: For simplicity, we consider the following situation. A rectangular container $\mathcal{P}_{i}$ is attached to the cart by two parallel ropes, and a trapezoidal obstacle $\mathcal{P}_{j}$ moves on the land, as shown in Fig \ref{Fig:examples}(b). The configuration variable of this system is $q=\left[y(t), z(t) , \theta(t)\right]^{\top}$, and the {dynamics} of such an underactuated container crane is similar to an aerial pendulum, expressed as the following underactuated Euler-Lagrangian equation:
\begin{equation}\label{eq:contrainer_crane}
\begin{aligned}
      (M+m) \ddot{y}+m l\left(\ddot{\theta} \cos \theta-\dot{\theta}^2 \sin \theta\right) &=u_y, \\
       (M+m)(\ddot{z}+g)+m l\left(\ddot{\theta} \sin \theta+\dot{\theta}^2 \cos \theta\right)&=u_z, \\
      m l \ddot{y} \cos \theta+m l \ddot{z} \sin \theta+m l^2 \ddot{\theta}+m g l \sin \theta  &=0,
      \end{aligned}
\end{equation} 
with system parameters $M=10~ (\mathrm{kg}), m=5~ (\mathrm{kg}), g=9.8~ (\mathrm{N}\cdot \mathrm{kg}^{-1})$ and $l=0.7~ (\mathrm{m}) $.  
In fact, this dynamic is consistent with the Y-Z dynamics of the outer-loop system of the aerial transportation system \cite{yu2024fault, liang2019novel}. 

From the geometric relation shown in Fig \ref{Fig:examples}(b), the center of the container can be expressed as   
\begin{equation}\label{eq:pyz}
    p^i=[{p}_y,{p}_z]^{\top}=[y+l \sin \theta,z-l\cos \theta ]^{\top}. 
\end{equation}
Besides, the center of the obstacle is denoted as $p^j\in\mathbb{R}^2$. The attitudes of $\mathcal{P}^{i},\mathcal{P}^{j}$ are always identity matrices, and their vertexes are given by the columns of the following matrices: 
$$\begin{aligned}
     L^i &=\left[\begin{array}{cccc}  
     0.5 &  0.5& -0.5   &-0.5 \\
     0.25& -0.25& -0.25 &0.25
     \end{array}\right], \\
     L^j &= \left[\begin{array}{cccc}  
    1&3& -3& -1 \\
    0.5& -0.5& -0.5& 0.5 
     \end{array}\right],
\end{aligned}$$
based on which the $\mathcal{H}, \mathcal{V}$-representations of $\mathcal{P}^{i}(p^i), \mathcal{P}^{j}(p^j)$ in the initial frame 
can be obtained from Example \ref{exa:rigid_poly}. 
    
\emph{2) Nominal controller and safety filter design}:  A situation is considered here where the container crane is moving against the obstacle. The following PD-like stabilization control law \cite{liang2019novel} is applied to the input $u=[u_y,u_z]^{\top}$, so as to drive the crane  to the desired configuration  $q_d=[y_d,z_d,0]^{\top}$:
\begin{equation}\label{eq:u_0Kp}
    u_{0}=-K_p \operatorname{tanh}(\chi - \chi_d) - K_d \operatorname{tanh}(\dot{\chi}) + (M+m)ge_2, 
\end{equation}
where $\operatorname{tanh}(u):=[\operatorname{tanh}(u_1),\operatorname{tanh}(u_2)]^{\top},\forall u=[u_1,u_2]^{\top}$, and 
$$\begin{aligned}
    \chi &=[y+\lambda \sin\theta, z-\lambda \cos\theta]^{\top},
    \quad \chi_d =[y_d, z_d-\lambda ]^{\top}, \\
    \dot{\chi} &=[\dot{y}+\lambda \dot{\theta}\cos\theta, \dot{z}+\lambda \dot{\theta}\sin\theta]^{\top},\quad e_2=[0,1]^{\top}. \\
    \end{aligned}$$
Control gains are set to $K_p=[5,0;0,10],K_d=[7,0;0,7]$, and $\lambda=-0.01$.  Finally, the velocity of the obstacle is set to $\dot{p}^j=[-0.1,0]^{\top} (\mathrm{m/s})$ with initial position $p^j(0)=[6,0.5]^{\top}  (\mathrm{m})$. On the other hand, the initial and desired 
configuration values of the crane are set to $q(0)=[0,1.5,0]^{\top}$ and $q_d=[10,1.5,0]^{\top}$ (with units $\mathrm{m},\mathrm{m}, \mathrm{rad}$) respectively, while the initial/desired velocities are zeros. Under the above setting, the moving process of the container crane and obstacle is shown in Fig. \ref{Sim2}(a).
    
Now, the proposed function $\hat{h}_{a}(p^i,p^j)$ will be extended to achieve collision avoidance between such a container crane and obstacle. Specifically, substituting $p^i=[p_y,p_z]^{\top}$ defined in \eqref{eq:pyz} into \eqref{eq:contrainer_crane}, one can obtain the equivalent dynamical equation under the new  coordinate $p=[(p^i)^{\top},\theta]^{\top}$: 
\begin{equation}\label{eq:new_expression}
M_{T}\ddot{p}+C_{T}\dot{p}+G_{T}=B_{T} u,
\end{equation}
where 
 \begin{small}
    $$
    \begin{aligned}
    M_{T} &=\left[\begin{array}{ccc}
    M+m & 0 & -M l \cos \theta \\
    0 & M+m & -M l \sin \theta \\
    -M l \cos \theta & -M l \sin \theta & M l^2
    \end{array}\right], \\
    C_{T}&=\left[\begin{array}{ccc}
    0 & 0 &   M l \dot{\theta} \sin \theta \\
    0 & 0 & - M l \dot{\theta} \cos \theta \\
    0 & 0 & 0
    \end{array}\right],   \\
    {G}_{T} &= 
    \left[\begin{array}{c}
      0\\
      (M+m) g\\
      -M g l \sin \theta
      \end{array}\right], 
    {B}_{T} =
      \left[\begin{array}{cc}
        1 & 0  \\
        0 & 1  \\
        -l\cos\theta & -l\sin\theta
        \end{array}\right]. 
    \end{aligned}
    $$
    \end{small}

Inspired by the energy-based CBF construction method for full-actuated systems in \cite{singletary2021safety}, one could extend  $\hat{h}_{a}(p^i,p^j)$ proposed in \eqref{eq:underopsd4} to build a CBF for the underactuated system \eqref{eq:new_expression} or \eqref{eq:contrainer_crane}: $$\phi(p,\dot{p},p^j):= \eta \hat{h}_{a}(p^i,p^j) - \frac{1}{2} \dot{p}^{\top}  M_{T}(\theta) \dot{p},$$
where  $p=[(p^i)^{\top},\theta]^{\top}$ and $\eta>0$ is a user-defined gain. Its time derivative along the system is 
$$ \begin{aligned}
    &\dot{\phi}(p,\dot{p},p^j,u) =  \eta  \dot{\hat{h}}_{a}(p^i,p^j)  - \frac{1}{2}  \dot{p}^{\top}  \dot{M}_{T}(\theta) \dot{p}
    -   \dot{p}^{\top}  {M}_{T}(\theta) \ddot{p}
    \\
    &= \eta  \dot{\hat{h}}_{a}(p^i,p^j)  
     - \dot{p}^{\top}\left(B_{T}u - G_T  +\frac{1}{2}(\dot{M}_{T}-2 C_{T}) \dot{p}\right) \\
     &=-[\dot{y}^{\top}, \dot{z}^{\top}] u + \eta ((\nabla_{p^i} \hat{h}_{a})^{\top}\dot{p}^{i} +
    (\nabla_{p^j} \hat{h}_{a})^{\top}\dot{p}^{j} ) 
    + \dot{p}^{\top}  G_T, 
  \end{aligned}$$
where the property is applied that  $(\dot{M}_{T}-2 C_{T})$ is an anti-symmetric matrix.

Then the following safety filter can be applied for $u$ to modify the nominal input $u_0$ in \eqref{eq:u_0Kp} to produce the collision avoidance behavior: 
$$\begin{aligned}
     u_{*} =  \underset{u \in \mathbb{R}^{2}} {\operatorname{argmin}} &~  ( u - u_{0})^{\top}Q ( u - u_{0}),  \\
    &  \text{ s.t. }  
    \dot{\phi}(p,\dot{p},p^j,u)
     \geq -  \alpha \phi(p,\dot{p},p^j), 
\end{aligned}$$
where the weighting matrix $Q=[1000, 0;0, 2]$  and other gains are set as $\eta=500,\alpha=3,\kappa=5,b=8$. With this safety filter, the moving process is shown in Fig. \ref{Sim2}(b). 
    
\emph{3) Results}: The simulations are also performed in Matlab/Simulink. Fig. \ref{Sim2} illustrates the process of the container crane and the moving obstacle in the Y-Z plane. In subfigure (a), driven by the nominal controller, there is a sustained collision between the container $\mathcal{P}^i$ (blue line) and the obstacle $\mathcal{P}^j$ (green line) during the time interval $[6,11] ~(\mathrm{s})$. In subfigure (b),  such collision is avoided with the safety filter. \\
     
In summary, such two examples show that the proposed smooth approximated signed distance function $\hat{h}_{a}$ can be extended easily and has great potential to achieve polygonal collision avoidance for complex nonlinear systems, such as nonholonomic vehicles and underactuated Euler-Lagrangian dynamics.

\begin{figure}
      \centering
      \vspace{-0.2cm} 
      \subfloat[Response of the nominal controller without the safety filter. There is a sustained collision between the container $\mathcal{P}^i$ and the obstacle $\mathcal{P}^j$ from $6~(\mathrm{s}) $ to $11~(\mathrm{s}) $.]{\includegraphics[width=0.37\textwidth]{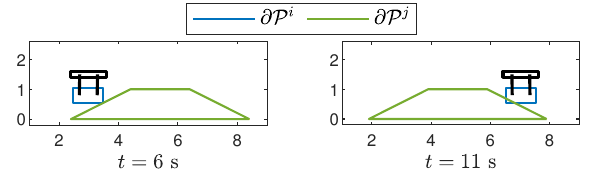}}
      
      \subfloat[Response of the nominal controller with the safety filter. With the safety filter,  
      such collision is avoided.]{\includegraphics[width=0.39\textwidth]{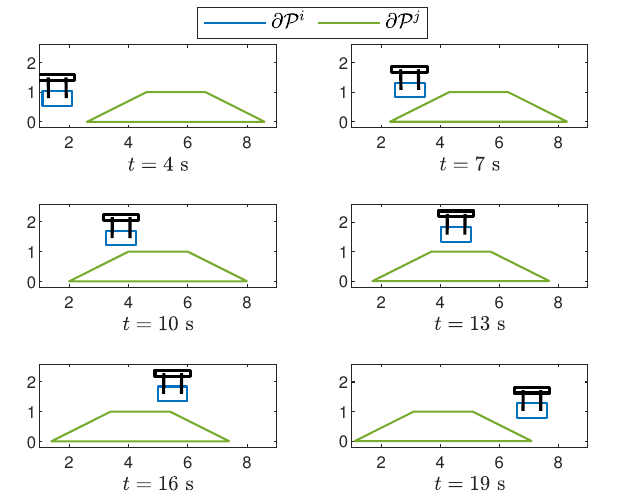}} 
    
    \caption{Illustration of the process of container crane and the moving obstacle in the Y-Z plane without/with the proposed safety filter. 
    } \label{Sim2}   
      \vspace{-0.2cm}  
\end{figure}  
            
\section{Conclusion}
\label{sec:conclusion}
In this paper, a general framework is presented for the obstacle avoidance between polygons by an optimization-free smooth CBF.
This is achieved by proposing a smooth approximation of a lower bound of the traditional SDF, where great efforts have been made to study and guarantee its non-conservatism in theory. 
Simulations are carried out for two complex nonlinear systems, to show  
 wide application scenarios of the above theoretical results. 
{Since the proposed CBF method is optimization-free, resulting in a very high calculation speed, it is very suitable to be extended for large-scale systems, such as coordinated collision avoidance of  polytopic-shape multi-robots under cluttered environments. The biggest limitation of the proposed method is that it is now only applicable to two-dimensional cases, while how to extend it to the 3D situation is not  obvious. Besides, input constraints are not considered, hence the proposed method may fail to work when the control input is constrained.}  
Future works involve doing experimental verifications on real-world robotic platforms, extending to  {more general polytopic multi-robots, high-order dynamics, control input constraints, and more complex polytopic timed reach-avoid specifications}. 
 
\bibliographystyle{IEEEtran}
\bibliography{bibliography}

\appendix
\setcounter{equation}{0}
\renewcommand{\theequation}{A.\arabic{equation}}

\subsection{Proof of {Lemma} \ref{lem:support-halfspace}}
\label{app:proof-lem2}

\begin{proof}
First, recall the properties of support function in \cite{schneider2014convex} that 
for any convex sets $K_1, K_2$ and vector $a$, one has 
\begin{subequations}\label{eq:hK12a}
\begin{align}
h_{K_{1}+K_{2}}(a) &= h_{K_{1}}(a) + h_{K_{2}}(a), \\
h_{K_{1}-K_{2}}(a) &= h_{K_{1}}(a) + h_{-K_{2}}(a), \\
h_{-K_{2}}(a) & = h_{K_{2}}(-a). 
\end{align}
\end{subequations}
The Eq. \eqref{eq:hK12a} means that the support function of a Minkowski sum/difference is the sum/difference of the support functions of components. Based on \eqref{eq:hK12a}, one can obtain
\begin{align}
h_{\mathcal{P}}(-A^{i}_{k^i}) 
&= h_{\mathcal{P}_j}(-A^{i}_{k^i}) + h_{\mathcal{P}_i}(A^{i}_{k^i}) 
\nonumber \\
&=\max_{{\omega}^{j} \in \mathcal{P}^{j} } (-A^{i}_{k^i})^{\top} {\omega}^{j} +
  \underbrace{ \max_{{\omega}^{i} \in \mathcal{P}^{i} } ( A^{i}_{k^i})^{\top} {\omega}^{i}}_{=b^{i}_{k^i}} , \label{eq:hmpAiki} \\
  h_{\mathcal{P}}(A^{j}_{k^j}) 
&= h_{\mathcal{P}_j}(A^{j}_{k^j}) + h_{\mathcal{P}_i}(-A^{j}_{k^j}) 
\nonumber  \\
&= \underbrace{\max_{{\omega}^{j} \in \mathcal{P}^{j} } (A^{j}_{k^j})^{\top} {\omega}^{j} }_{=b^{j}_{k^j}}
   + \max_{{\omega}^{i} \in \mathcal{P}^{i} } (- A^{j}_{k^j})^{\top} {\omega}^{i}.   \label{eq:hmpAjkj}
\end{align}
By recalling the definition of support halfspace in \eqref{eq:support-halfspace}, and observing \eqref{eq:matPjPiki}, \eqref{eq:hmpAiki}, as well as \eqref{eq:matPjkjPi}, \eqref{eq:hmpAjkj}, one can know that 
\eqref{eq:mathcalPkijwu} holds.   

Next, we attempt to prove \eqref{eq:mathcalPij5}. In particular, the second equality in \eqref{eq:mathcalPij5} can be obviously obtained based on \eqref{eq:mathcalPkijwu} and \eqref{eq:mthcalN-ij}, hence one only needs to prove the first equality in \eqref{eq:mathcalPij5}. 
Working towards this, recall a classical result \cite[Corollary 2.4.4]{schneider2014convex}:  
for any polytope $\mathcal{Q}$, let $\mathcal{A}= \{a_i: i=1,2,...,k\}$ denote  the set of all normal vectors of $\mathcal{P}$, then the polytope $\mathcal{Q}$ can be expressed as the intersection of support halfspaces corresponding to  normal vectors in $\mathcal{A}$, i.e., 
$\mathcal{Q}=\bigcap_{a\in \mathcal{A}} H_{\mathcal{Q}}^{-}(a)$. 
Motivated by this proposition, the first equality in \eqref{eq:mathcalPij5} can be obtained directly if 
$\mathcal{N}(x)$ can be proved to be the set of all normal vectors of $\mathcal{P}(x)$. 

In fact, for polygons $\mathcal{P}^j(x^j), \mathcal{P}^i(x^i)\subset \mathbb{R}^{2}$, $\mathcal{N}(x)$ is truly the set of all normal vectors of $\mathcal{P}(x)$, as seen in  Fig \ref{fig:minkowski_di}. In more detail, as pointed out in \cite[Section 3.1]{teissandier2011algorithm}, 
the edges of the polygonal (Minkowski) addition are translations of the edges of the two operand polygons. Similarly, the normal vector of the polygonal difference $\mathcal{P}=\mathcal{P}^j-\mathcal{P}^i$ are translations of the normal vectors of the two operand polygons $\mathcal{P}^j, -\mathcal{P}^i$. That is to say, the union $\mathcal{N}= \mathcal{N}^{j}\cup \mathcal{N}_{-}^{i}$ is the set of all normal vectors of the Minkowski difference $\mathcal{P}=\mathcal{P}^j - \mathcal{P}^i$. 
\end{proof}

\subsection{Proof of {Theorem} \ref{the:the-signed-distance-is}}
\label{app:proof-them2}
    
\begin{proof}
1) In fact,  ``$\geq$'' in \eqref{eq:opertsd2} has been shown in {Proposition} \ref{lem:hsxhax}. Hence, one only needs to prove that ``$=$'' in \eqref{eq:opertsd2} holds on the mentioned point set,  as proved in the following two steps. 
   
Step 1: Recalling \eqref{eq:mathcalPij5} in {Lemma} \ref{lem:support-halfspace}, $\mathcal{P}$ can be written as 
   \begin{align}
   \mathcal{P}=\bigcap_{k\in [r^i+r^j]} \mathcal{Q}_{k}
   =\bigcap_{k\in [r^i+r^j]} \left(\mathcal{Q}_{k} \cap \widetilde{\mathcal{Q}}_{k} \right), \label{eq:mathcalPij-21} 
   \end{align}
    where 
   $
   \widetilde{\mathcal{Q}}_{k}= \bigcap_{k\neq m\in [r^i+r^j]} \mathcal{Q}_{m} 
   $. 
   Based on \eqref{eq:mathcalPij-21} and Lemma \ref{lem:partialK1K2} in Appendix \ref{app:df}, the following relations hold:  
   %
   \begin{subequations} \label{eq:partial-Pijbig}
   \begin{align} 
    \partial \mathcal{P} 
   &=   
   \bigcup_{k\in [r^i+r^j]} \left( \partial  \mathcal{Q}_{k} \cap \widetilde{\mathcal{Q}}_{k} \right), 
   \\
    d( {0},\partial \mathcal{P}) 
    & =  \min_{k\in [r^i+r^j]} d( {0},  \partial  \mathcal{Q}_{k} \cap \widetilde{\mathcal{Q}}_{k} ) .  
     \label{eq:d-0-partial-Pij}
   \end{align}
   \end{subequations}
    and for every $k\in [r^i+r^j] $, one has 
   \begin{subequations} \label{eq:partial-mathcalPij}
   \begin{align} 
    \partial  \mathcal{Q}_{k} 
   & = ( \partial  \mathcal{Q}_{k} \cap \widetilde{\mathcal{Q}}_{k})  \cup ( \partial  \mathcal{Q}_{k} / \widetilde{\mathcal{Q}}_{k}), \\
   d( {0}, \partial  \mathcal{Q}_{k}  ) 
   & = \min   \{   d( {0},  \partial  \mathcal{Q}_{k} \cap \widetilde{\mathcal{Q}}_{k} ), 
    d( {0},  \partial  \mathcal{Q}_{k} / \widetilde{\mathcal{Q}}_{k})  \}.  
    \label{eq:d-0-partial-wid-Pij} 
   \end{align}
   \end{subequations}
    
Step 2: Recalling the expressions of closed sets $\mathcal{P}$ in \eqref{eq:mathcaiPij} and $\mathcal{S}$ in \eqref{eq:def-safe-set}, one can obtain: 
\begin{equation}\label{eq:xmathSijc}
\begin{aligned} 
   x &\in   {\mathcal{S}}^{c} 
   &\Leftrightarrow~  h_{s}(x)<0
   \Leftrightarrow&~  {0} \in \operatorname{Int}  (\mathcal{P}(x)), \\
   x &\in  \partial \mathcal{S} 
   &\Leftrightarrow~ h_{s}(x)=0
   \Leftrightarrow&~ {0} \in \partial \mathcal{P}(x) , \\
   x&\in  \operatorname{Int}(\mathcal{S}) 
   & \Leftrightarrow~ h_{s}(x)>0 
   \Leftrightarrow&~  {0} \in  (\mathcal{P}(x))^{c}.  
\end{aligned}
\end{equation}
When $x \in \partial \mathcal{S} \cup {\mathcal{S}}^{c}$, one has $ {0} \in \mathcal{P}(x)$ via \eqref{eq:xmathSijc}, then ${0} \in \mathcal{Q}_{k}, ~ \forall k\in [r^{i}+r^{j}] $ from \eqref{eq:mathcalPij-21}. It is obvious from  $ {0} \in \mathcal{P}$ that 
\begin{align*} 
   d({0}, \partial \mathcal{P}) & < d({0}, \mathcal{P}^{*}) , 
\end{align*}
holds for any set $\mathcal{P}^{*}$ in $\mathcal{P}^{c}$. It further yields
\begin{align} 
d({0}, \partial \mathcal{P}) & <\min_{k\in [r^i+r^j]} d( {0}, \partial \mathcal{Q}_{k} / \widetilde{\mathcal{Q}}_{k} ).\label{eq:d-0-partial-wid-inq}
\end{align}
Applying the formulas \eqref{eq:d-0-partial-wid-Pij}, \eqref{eq:d-0-partial-Pij},  \eqref{eq:d-0-partial-wid-inq} in turn, we get 
\begin{align} 
    &\min_{k\in [r^i+r^j]} d({0}, \partial {\mathcal{Q}}_{k}) \nonumber   \\
    &=\min_{k\in [r^i+r^j]}  \left\{   d( {0},  \partial  \mathcal{Q}_{k} \cap \widetilde{\mathcal{Q}}_{k} ), 
    d( {0},  \partial  \mathcal{Q}_{k} / \widetilde{\mathcal{Q}}_{k})  \right\} \nonumber \\
    &=  \min \left\{ d( {0},\partial \mathcal{P}),   
    \min_{k\in [r^i+r^j]} d( {0},  \partial \mathcal{Q}_{k} / \widetilde{\mathcal{Q}}_{k} ) \right\} \nonumber \\
    &=  d( {0},\partial \mathcal{P}) .   \label{eq:minkiri}
\end{align}
Recalling the facts that whenever $ {0} \in \mathcal{P}$, it holds that  
\begin{align*} 
   \operatorname{sd}({0},  \mathcal{P} )
   &=-d({0}, \partial \mathcal{P}),  \\
   \operatorname{sd} ({0}, \mathcal{Q}_{k})
   &= -d({0},  \partial  \mathcal{Q}_{k}),  ~\forall k\in [r^i+r^j]. 
\end{align*}
Based on this and together with \eqref{eq:minkiri}, it yields 
\begin{align*} 
   {h}_{a}(x) & = \max_{k\in [r^i+r^j]}   \left\{-d({0}, \partial  \mathcal{Q}_{k})\right\} 
    = - \min_{k}  d({0}, \partial  \mathcal{Q}_{k})  \nonumber \\
  & = - d( {0},\partial \mathcal{P})  =\operatorname{sd}({0},  \mathcal{P} ) ={h}_{s}(x),    
   \end{align*}
   which means that the inequality ``$=$'' in \eqref{eq:opertsd2} has been proved. 
   
  2)  First,  by the aid of \eqref{eq:xmathSijc},  the proposition \eqref{eq:underlinesdij} can be equivalently expressed as 
   \begin{subequations} 
   \begin{align} 
   {h}_{a}(x)<0 &\Longleftrightarrow  h_{s}(x)<0, \label{eq:underhlt0} \\
   {h}_{a}(x)=0 &\Longleftrightarrow  h_{s}(x)=0, \label{eq:underheq0} \\
   {h}_{a}(x)>0 &\Longleftrightarrow  h_{s}(x)>0, \label{eq:underhgt0}
   \end{align}
   \end{subequations}
   where the first proposition naturally holds from \eqref{eq:opertsd2}, and the last two are proved as follows.
   
  Recalling \eqref{eq:mathcalPij4} in {Proposition} \ref{lem:hsxhax},  the left hand in \eqref{eq:underheq0} becomes 
    \begin{equation}\label{eq:undlinex0}
   \begin{aligned} 
   {h}_{a}(x)=0 \Longleftrightarrow  &{0} \in   \bigcap_{k\in [r^i+r^j]} \mathcal{Q}_{k}(x)=\mathcal{P}(x)    \text{ and } \\
   &\exists k\in [r^i+r^j], \text{ s.t. }
   {0} \in \partial   {\mathcal{Q}}_{k}(x),  
   \end{aligned}
   \end{equation}
   where  the right hand in \eqref{eq:undlinex0} is further equivalent to 
   \begin{align} 
    {0} 
   &\in  \mathcal{P} \cap ~\bigcup_{k } \partial  \mathcal{Q}_{k} =  \bigcup_{k} \left( \partial  \mathcal{Q}_{k} \cap \widetilde{\mathcal{Q}}_{k} \right)
   = \partial \mathcal{P}. \label{eq:mathPcap}
   \end{align}
   In fact, by recalling \eqref{eq:xmathSijc} again, Eq. \eqref{eq:mathPcap} is equivalent to the right hand in \eqref{eq:underheq0}.

   In \eqref{eq:underhgt0}, ``$\Longleftarrow$" directly comes from  \eqref{eq:mathcalPij4} in {Proposition} \ref{lem:hsxhax}, and ``$\Longrightarrow$" also holds since 
   \begin{align*} 
    h_{s}(x) >0 \Longleftrightarrow&~  {0} \in \operatorname{Int} (\mathcal{P}) \\
   \Longleftrightarrow &~ {0} \in \operatorname{Int} ({\mathcal{Q}}_{k}),~\forall k\in [r^{i}+r^j]   \\
     \Longrightarrow & {h}_{a}(x)>0,   
   \end{align*}
   where the fact that $\operatorname{Int} (\mathcal{P}) = \bigcap_{k } \operatorname{Int} ({\mathcal{Q}}_{k})$ is applied. 
   \end{proof}
 
\subsection{Basic results about distance functions}\label{app:df}
\begin{lemma}\label{lem:partialK1K2}
For two nonempty closed set ${K}_{1},{K}_{2}$, 
\begin{align*} 
  \partial ({K}_{1}\cap {K}_{2}) &=  (\partial {K}_{1} \cap {K}_{2}) \cup ( \partial {K}_{2} \cap {K}_{1}), 
\end{align*}   
and for all $x$, 
\begin{align*}
 d(x,{\partial K  }) 
  &=\min\{d(x,{\partial {K}_{1} \cap {K}_{2}}), d(x,\partial {K}_{2} \cap {K}_{1}) \}.
\end{align*}  
\end{lemma}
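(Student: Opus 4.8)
The plan is to prove the boundary identity first by elementary point-set topology, and then obtain the distance formula as an immediate corollary using the fact that the distance to a union is the minimum of the two distances. Throughout I write $K := K_1 \cap K_2$, which is closed since $K_1, K_2$ are closed, and I use two standard facts: for a closed set $S$ one has $\partial S = S \setminus \operatorname{Int}(S)$, and the interior of an intersection satisfies $\operatorname{Int}(K_1 \cap K_2) = \operatorname{Int}(K_1) \cap \operatorname{Int}(K_2)$. This interior identity is the topological engine behind both inclusions.

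For the inclusion $\partial K \subseteq (\partial K_1 \cap K_2) \cup (\partial K_2 \cap K_1)$, I would take $p \in \partial K$. Since $K$ is closed, $p \in K$, hence $p \in K_1$ and $p \in K_2$, while $p \notin \operatorname{Int}(K)$. If $p$ lay in both $\operatorname{Int}(K_1)$ and $\operatorname{Int}(K_2)$, then $p \in \operatorname{Int}(K_1) \cap \operatorname{Int}(K_2) = \operatorname{Int}(K)$, a contradiction. Therefore $p \notin \operatorname{Int}(K_1)$ or $p \notin \operatorname{Int}(K_2)$; in the first case $p \in K_1 \setminus \operatorname{Int}(K_1) = \partial K_1$ with $p \in K_2$, and in the second case $p \in \partial K_2$ with $p \in K_1$. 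Either way $p$ lies in the right-hand side.

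For the reverse inclusion I would show $\partial K_1 \cap K_2 \subseteq \partial K$ and, symmetrically, $\partial K_2 \cap K_1 \subseteq \partial K$. Let $p \in \partial K_1 \cap K_2$. Closedness of $K_1$ gives $p \in K_1$, so with $p \in K_2$ we get $p \in K$. Because $p \in \partial K_1$, every neighbourhood of $p$ meets $K_1^{c} \subseteq K^{c}$, so $p \notin \operatorname{Int}(K)$; combined with $p \in K = \overline{K}$ this yields $p \in \partial K$. Taking the union of the two inclusions establishes the boundary identity. The one point requiring care here is the consistent use of closedness, which is exactly what guarantees that a boundary point of $K_1$ actually belongs to $K_1$ (and hence to $K$) rather than merely to its closure.

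Finally, for the distance formula I would invoke the identity just proved to write $\partial K = A \cup B$ with $A := \partial K_1 \cap K_2$ and $B := \partial K_2 \cap K_1$, and then apply the elementary computation $d(x, A \cup B) = \inf_{s \in A \cup B}\|x - s\| = \min\{\inf_{s \in A}\|x - s\|, \inf_{s \in B}\|x - s\|\} = \min\{d(x, A), d(x, B)\}$, adopting the convention $d(x, \emptyset) = +\infty$ so the formula stays valid when one of the two pieces is empty. The proof is not technically deep; the main thing to watch is the bookkeeping with closedness and the interior-of-intersection identity, which is what makes both inclusions go through cleanly.
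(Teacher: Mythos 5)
Your proof is correct, and it reaches the paper's conclusion by a slightly different mechanism. The paper establishes the boundary identity through a single set-algebra chain: it writes $\partial (K_1 \cap K_2) = (K_1 \cap K_2) \cap \overline{(K_1 \cap K_2)^c}$, applies De Morgan's law together with $\overline{K_i^c} = \partial K_i \cup K_i^c$ (valid because each $K_i$ is closed), and then distributes, observing that the two terms $K_1 \cap K_2 \cap K_i^c$ are empty. You instead argue by double inclusion, using the dual pair of facts $\partial S = S \setminus \operatorname{Int}(S)$ for closed $S$ and $\operatorname{Int}(K_1 \cap K_2) = \operatorname{Int}(K_1) \cap \operatorname{Int}(K_2)$. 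The two arguments are essentially dual formulations of the same elementary computation, and both invoke closedness at exactly the same pressure point: the paper needs $\partial K_i \subseteq K_i$ so that $K_1 \cap K_2 \cap \partial K_1$ collapses to $K_2 \cap \partial K_1$, while you need it so that $p \in \partial K_1$ forces $p \in K_1$ rather than merely $p \in \overline{K_1}$. For the distance formula the paper cites the union property of distance functions from Delfour--Zol\'esio as a black box, whereas you verify the infimum-over-a-union computation directly; your version is marginally more self-contained, needs no closedness of the two pieces, and your convention $d(x,\emptyset) = +\infty$ explicitly covers the degenerate case $K_1 \cap K_2 = \emptyset$ (where both sides of the boundary identity are empty), which the paper leaves implicit. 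Neither route has a gap; the difference is one of bookkeeping style rather than substance.
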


\begin{proof}
The relations can be proved 
in the following procedure:
$$
  \begin{aligned}
    & \partial ({K}_{1}\cap {K}_{2}) 
   = ({K}_{1}\cap {K}_{2}) \cap  \overline{({K}_{1}\cap {K}_{2})^c} \\
  &= ({K}_{1}\cap {K}_{2}) \cap  \overline{({K}_{1}^c \cup {K}_{2}^c)}  
   = ({K}_{1}\cap {K}_{2}) \cap  ( \overline{{K}_{1}^c} \cup \overline{{K}_{2}^c}) \\
  &= ({K}_{1}\cap {K}_{2}) \cap  ( \partial {K}_{1} \cup {K}_{1}^c \cup \partial {K}_{2} \cup {K}_{2}^c)  \\
  &= \underbrace{ (  {K}_{1}\cap {K}_{2}  \cap  \partial {K}_{1})}_{= {K}_{2}  \cap  \partial {K}_{1}}   \cup
   \underbrace{ (  {K}_{1}\cap {K}_{2}  \cap {K}_{1}^c)}_{=\emptyset}  \\
  &~~      \cup  \underbrace{({K}_{1}\cap {K}_{2} \cap \partial {K}_{2})}_{= {K}_{1}  \cap  \partial {K}_{2}}  
     \cup  \underbrace{({K}_{1}\cap {K}_{2} \cap  {K}_{2}^c)}_{=\emptyset}  \\  
  &=  (\partial {K}_{1} \cap {K}_{2}) \cup (   \partial {K}_{2} \cap {K}_{1}).   
\end{aligned} $$    
Recalling the property of (unsigned) distance function $d(\cdot,\cdot)$ in \cite[Theorem 2.1 (vi)]{delfour2011shapes} that 
for all closed sets $B_{1},B_{2}$, it holds that   
$
d(x,B_{1}\cup B_{2}) =\min\{d(x, B_{1}), d(x,B_{2})  \}, ~\forall x
$. 
By choosing $\partial {K}_{1} \cap {K}_{2},  \partial {K}_{2} \cap {K}_{1}$ as $B_1, B_2$ respectively, the lemma can be proved.   
\end{proof}

\end{document}